\theoremstyle{thmstyleone}%
\newtheorem{theorem}{Theorem}%  meant for continuous numbers
\newtheorem{lemma}[theorem]{Lemma}%
\theoremstyle{thmstyletwo}%
\newtheorem{example}{Example}%
\newtheorem{remark}{Remark}%
\theoremstyle{thmstylethree}%
\begin{document}

\title[Monotonicity-based regularization approach]{A Monotonicity-Based Regularization Approach to Shape Reconstruction for the Helmholtz Equation}

%%=============================================================%%
%% GivenName	-> \fnm{Joergen W.}
%% Particle	-> \spfx{van der} -> surname prefix
%% FamilyName	-> \sur{Ploeg}
%% Suffix	-> \sfx{IV}
%% \author*[1,2]{\fnm{Joergen W.} \spfx{van der} \sur{Ploeg}
%%  \sfx{IV}}\email{iauthor@gmail.com}
%%=============================================================%%

\author[1]{\fnm{Sarah} \sur{Eberle-Blick}}\email{Sarah.Eberle-Blick@ku.de}

\author[2]{\fnm{Bastian} \sur{Harrach}}\email{harrach@math.uni-frankfurt.de}
%\equalcont{These authors contributed equally to this work.}

\author*[2,3]{\fnm{Xianchao} \sur{Wang}}\email{xcwang90@gmail.com}
%\equalcont{These authors contributed equally to this work.}

\affil[1]{\orgdiv{Mathematical Institute for Machine Learning and Data Science}, \orgname{Catholic University of Eichst\"att-Ingolstadt},  \orgaddress{\street{Hohe-Schul-Str. 5}, \city{Ingolstadt}, \postcode{85049}, \state{Bavaria}, \country{Germany}}}

\affil[2]{\orgdiv{Institute for Mathematics}, \orgname{Goethe-University Frankfurt}, \orgaddress{\street{Robert-Mayer-Str. 10}, \city{Frankfurt am Main}, \postcode{60325}, \state{Hessen}, \country{Germany}}}

\affil*[3]{\orgdiv{School of Mathematics}, \orgname{Harbin Institute of Technology}, \orgaddress{\street{West-Dazhi-Str. 92}, \city{Harbin}, \postcode{150001}, \state{Heilongjiang}, \country{P. R. China}}}

%%==================================%%
%% Sample for unstructured abstract %%
%%==================================%%

\abstract{We consider an inverse boundary value problem for determining unknown scatterers, which is governed by the Helmholtz equation in a bounded domain.  To address this, we develop a novel convex data-fitting formulation that is capable of reconstructing the shape of the unknown scatterers.
Our formulation is based on a monotonicity relation between the scattering index and boundary measurements. We use this relation to obtain a pixel-wise constraint on the unknown scattering index, and then minimize a data-fitting functional defined as the sum of all positive eigenvalues of a linearized residual operator. The main advantages of our new approach are that this is a convex data-fitting problem that does not require additional PDE solutions. The global convergence and stability of the method are rigorously established to demonstrate the theoretical soundness. In addition, several numerical experiments are conducted to verify the effectiveness of the proposed approach in shape reconstruction.}

\keywords{inverse boundary value problem, Helmholtz equation, monotonicity relation, global convergence, shape reconstruction}

%%\pacs[JEL Classification]{D8, H51}

%%\pacs[MSC Classification]{35A01, 65L10, 65L12, 65L20, 65L70}

\maketitle

\section{Introduction}\label{sec1}

This article is concerned with an inverse boundary value problem for the  Helmholtz equation in a bounded region. Our goal is  to reconstruct  unknown scatterers inside a domain by using the measured wave field on the  boundary. Typically, Neumann boundary data are prescribed as input, and an array of receivers is placed along the boundary. The corresponding Dirichlet data measured by these receivers are then used to identify the locations and shapes of the scatterers.  This type of problem has attracted substantial attentions in both science and technology fields, such as  non-destructive testing \cite{Isakov, Liu15} and medical imaging \cite{Arridge99, Zhou18}.

Next, we present the mathematical formulation of an inverse boundary value problem for the Helmholtz equation.  Let $\Omega\subset \mathbb{R}^n$, $n\geq 2$, be a bounded Lipschitz domain with unit exterior normal vector $\nu$, and let $k>0$ be the frequency. We consider the following Helmholtz equation  with  a Neumann boundary data $g\in L^2(\partial \Omega)$
\begin{equation}\label{eq:main}
  \begin{cases}
  &\Delta u+k^2 q u=0,\quad \text{in}\ \Omega, \medskip\\
  & \partial_{\nu}u=g,  \qquad \qquad  \, \text{on}\ \partial \Omega,
  \end{cases}
\end{equation}
where $q(x)\in L^{\infty}(\Omega)$ denotes the refractive index.
We define the  Neumann-to-Dirichlet (NtD) operator by
\begin{equation}\label{eq:DtN}
  \Lambda(q): g\in L^2(\partial \Omega) \to u\in L^2(\partial \Omega).
\end{equation}
For any fixed $k>0$, to ensure the Helmholtz  problem has a unique solution,  the domain of the NtD operator is defined as
  \begin{equation}\label{eq:D}
\mathcal{D}(\Lambda):=\{ q(x):  q(x)\in L^{\infty}(\Omega)\setminus L_k(\Omega)\},
\end{equation}
where $L_k(\Omega)$ represents  the set of all $q\in L^{\infty}(\Omega)$ for which $k$ is a resonance frequency.
The forward problem  associated with \eqref{eq:main} is to determine the field $u$   for a given parameter $q$  and  boundary input $g$.  The well-posedness of this forward  problem can be conveniently found in \cite{Harrach19A-PDE}.
The inverse  problem  we are concerned with  is to determine scatterers, i.e. regions where the refractive  index  $q\in \mathcal{D}(\Lambda)$ differs from $1$, from the knowledge of finitely many measurements
\begin{equation}\label{eq:Fn}
 \mathcal{F}_n: \mathcal{D}(\Lambda)\rightarrow \mathbb{S}_n, \quad   \mathcal{F}_n(q): = \left(\int_{\partial \Omega} g_i \, \Lambda(q) g_j \,\mathrm{d}s\right)_{i,j=1}^n,
\end{equation}
where $\mathbb{S}_n\subset \mathbb{R}^{n\times n}$ denotes the space of symmetric matrices and $\{ g_1, g_2,\cdots, g_n \}$ forms an orthonormal basis in $L^2(\partial \Omega)$.

Inverse boundary value problems for the Helmholtz equation are typically nonlinear and severely ill-posed. Regarding uniqueness, Sylvester and Uhlmann \cite{Sylvester87} established global uniqueness under the assumption that the refractive index is a bounded measurable function. Furthermore, Harrach, Pohjola, and Salo \cite{Harrach19A-PDE} demonstrated local uniqueness from partial boundary data. Concerning stability, H\"ahner and Hohage \cite{Hohage01} proved that the aforementioned inverse boundary value problem  exhibits logarithmic stability. Subsequently, Beretta {\it et al} \cite{Beretta16} provided a Lipschitz stability estimate under the assumption that the refractive index can be represented as  a linear combination of piecewise constant functions.
With respect to numerical methods, optimization techniques are widely used for reconstructing unknown scatterers. These approaches involve solving a minimization problem, where the objective function measures the discrepancy between measured data and numerical solutions \cite{Haddar04}. However, they typically require a good initial guess for the unknown parameters as prior information, and the quality of the reconstruction strongly depends on the choice of regularization parameters. Moreover, such iterative methods lack theoretical guarantees for convergence, and the associated optimization process is computationally expensive.
In contrast, non-iterative methods, such as the linear sampling method \cite{Cakoni11, Colton1996}, factorization method \cite{Kirsch, Kirsch1998}, probe method \cite{Potthast06},   direct sampling method \cite{Ito12, chen2016direct},  and the recently developed monotonicity method \cite{Tamburrino2002, Griesmaier2018,   Harrach13SIMA, eberle2024monotonicity, lin2025montonicity, garde2024reconstruction, arens2023monotonicity, albicker2023monotonicity, mottola2025inverse, esposito2024piecewise, harrach2025monotonicity}, offer rigorous theoretical justifications and do not require  initial guesses or regularization parameters.
Nevertheless, these non-iterative approaches rarely achieve high reconstruction resolution. To address these challenges, Harrach and Minh \cite{Harrach16IP, harrach2018monotonicity} proposed a monotonicity-based regularization method for solving the inverse conductivity problem by combining the monotonicity method with data-fitting functionals. This method not only achieves good reconstruction performance but also guarantees global convergence. Later, Eberle and Harrach \cite{Eberle2022} extended this technique to inverse boundary value problems for stationary linear elasticity. It is worth noting that monotonicity-based regularization methods have so far been applicable only to stationary models.

In this work,  we  extend the framework of the monotonicity-based regularization method to the frequency-domain Helmholtz equation in a bounded domain. It is worth noting that this extension is nontrivial and involves subtle and technical analysis.  To highlight the novel contributions of the current study, we present three remarks as follows.
 Firstly, the monotonicity-based regularization technique has been developed primarily for stationary problems \cite{Harrach16IP, Eberle2022} and cannot be directly applied to frequency-domain problems,  such as the Helmholtz equation.  The main difficulty arises from the fact that the Helmholtz equation is not coercive but a compact perturbation of a coercive equation.  To address this issue, we provide a rigorous proof demonstrating that the domain of the Neumann-to-Dirichlet operator (excluding the eigenvalues)  is an open set. Then we prove that the Neumann-to-Dirichlet operator  is  Fr\'echet  differentiable in this domain  and provide  its Fr\'echet derivative in quadratic form.
  Furthermore, we establish a monotonicity relation between the refractive index and the Neumann-to-Dirichlet operator via  its Fr\'echet derivative.
Secondly, in contrast to the conductivity equation, the monotonicity relation for the Helmholtz equation holds only up to subspaces with finite codimension, specifically less than the number of positive Neumann eigenvalues \cite{Harrach19A-PDE}.  While the upper bound for the codimension of the subspaces has been improved in \cite{Harrach19siap}, determining an optimal  upper bound  remains an open issue. To tackle this challenge, we consider a circular model with a two-layer symmetric and piecewise constant medium, and numerically estimate the upper bound for the codimension of the subspaces.
Finally,  the monotonicity-based regularization method in \cite{Harrach16IP}, is formulated by minimizing a linearized residual functional with respect to the Frobenius norm. However, since the monotonicity relation for the Helmholtz equation is established only in subspaces of finite codimension, minimizing the linearized residual functional with respect to the Frobenius norm may fail in this case. To address this issue, we propose a novel strategy that minimizes the sum of the positive eigenvalues  of the linearized residual functional, which is solved by using the semidefinite programming.  Noting that there is no constraint on the corresponding negative eigenvalues, the minimization problem may fail to converge in the presence of measurement noise.  To enhance the stability of the approach, we also add the Frobenius norm of the linearized residual functional as a penalty term, with the noise level serving as the regularization parameter.  Therefore, the theoretical justifications for convergence and stability are significantly more challenging  than those for stationary models.

The promising features of the proposed monotonicity-based regularization approach for the Helmholtz equation can be summarized in two key aspects. On the one hand, the method offers high computational efficiency and low computational cost. Specifically, at each iteration step, there is no need to solve the forward problem, as the algorithm employs a single-step linearization and retains a fixed Fr\'echet derivative throughout the entire iterative process. Furthermore, although the method is based on minimizing the sum of all positive eigenvalues of the residual functional, this quantity is efficiently evaluated via semidefinite programming (SDP), thereby avoiding explicit eigenvalue computations.
On the other hand, in contrast to other optimization methods that offer only local convergence, the proposed method guarantees global convergence and exhibits robust performance. To the best of our knowledge, this is the first attempt to minimize the sum of all positive eigenvalues with a guarantee of global convergence, owing to the convexity of the objective functional.   Interested readers are also referred to the globally convergent algorithms and convexification techniques presented in \cite{Harrach21, Klibanov19, klibanov2025convexification, le2023numerical}.
Notably, the method does not require prior knowledge of the refractive index of the unknown scatterer or an accurate initial guess, only the background refractive index is assumed to be known.  Furthermore, the method demonstrates strong stability, maintaining reliable performance even under 10\% and 1\% measurement noise for single and multiple scatterers, respectively.

The rest of the paper is organized as follows. In Section 2, we first show that the set  $L_k(\Omega)$ defined in \eqref{eq:D}, is closed by applying the Banach-Alaoglu theorem. As a consequence, the domain of the Neumann-to-Dirichlet operator
$\Lambda(q)$, excluding the eigenmodes, forms an open set. Next, we prove that the Neumann-to-Dirichlet operator is Fr\'echet differentiable and derive its Fr\'echet derivative in quadratic form. We then establish a monotonicity relation between the refractive index
$q$ and the Neumann-to-Dirichlet operator $\Lambda(q)$. Finally, we propose a strategy to estimate the upper bound of the number of negative eigenvalues based on numerical calculations for piecewise constant layers. In Section 3, we focus on the monotonicity-based method for reconstructing the shape of unknown scatterers. First, we introduce the residual functional obtained via single-step linearization. Second, we present the associated minimization problem, where the objective functional is defined by the sum of  all positive eigenvalues and the constraint is formulated using a monotonicity relation, serving as a form of regularization. Third, we provide theoretical justifications for global convergence and stability, demonstrating the effectiveness of the proposed method. Several numerical experiments are presented in Section 4 to validate the performance of the proposed approach.
 In particular, we provide a selection rule for determining the upper bound of the regularization term.

\section{A monotonicity relation for the Helmholtz equation}

In this section, we establish a monotonicity relation between the refractive index $q$ and the Neumann-to-Dirichlet (NtD) operator $\Lambda(q)$, which is also related to  the Fr\'echet derivative of the NtD operator.  It is noted that  the openness of the domain $\mathcal{D}(\Lambda)$  is  a necessary condition for the NtD operator $\Lambda(q)$  to be  Fr\'echet  differentiable.   To this end, we first demonstrate that $\mathcal{D}(\Lambda)$  is an open set.

%Following \cite[section 2A]{Harrach19A-PDE}, we introduce some notations.  Let $I: H^1(\Omega) \rightarrow H^1(\Omega)$ denote the identity operator,  $j: H^1(\Omega) \rightarrow  L^2(\Omega)$ be the compact embedding from, and $M_q: L^2(\Omega)\rightarrow L^2(\Omega)$ be the multiplication operator by $q$. We also define the operators  $K:=j^*j$ and $K_q:=j^* M_q j$, it is clear to see that both $K$ and $K_q$ are compact self-adjoint linear operators from $H^1(\Omega) $ to $H^1(\Omega) $.  In addition,  let $\gamma: H^1(\Omega)\rightarrow L^2(\partial \Omega)$ denote the compact trace operator.
%With the above notations, the formulation \eqref{eq:variation} can be represented by
%\begin{equation*}
%  \langle (I-K-k^2K_q)u,\, v \rangle=\int_{\partial \Omega} g (\gamma v) \,\mathrm{d}s,  \quad \text{for\,all}\ v\in H^1(\Omega),
%\end{equation*}
%which is equivalent to
% \begin{equation*}
%(I-K-k^2K_q)u=\gamma^* g.
%\end{equation*}
%Define the solution operator to \eqref{eq:main}  as
%\begin{equation*}
%  \mathcal{S}(q):  g\in L^2(\partial\Omega) \to u\in H^1(\Omega),
%\end{equation*}
%one can find that
%\begin{equation*}
% \mathcal{S}(q)=(I-K-k^2K_q)^{-1} \gamma^*.
%\end{equation*}
%According to \eqref{eq:DtN},  we have
%\begin{equation*}
%  \Lambda(q)=\gamma \mathcal{S}(q)= \gamma (I-K-k^2K_q)^{-1} \gamma^*
%\end{equation*}

\begin{lemma}\label{lem:open-set}
Let  $\Lambda(q)$  be the Neumann-to-Dirichlet operator defined in \eqref{eq:DtN}, associated with  the Helmholtz equation \eqref{eq:main}. Then  the domain $\mathcal{D}(\Lambda)$  defined in \eqref{eq:D}  is an open set.
\end{lemma}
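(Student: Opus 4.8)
The plan is to establish the equivalent claim that the complement $L_k(\Omega)$ is \emph{closed} in $L^\infty(\Omega)$; since $\mathcal D(\Lambda)=L^\infty(\Omega)\setminus L_k(\Omega)$ by \eqref{eq:D}, openness of $\mathcal D(\Lambda)$ is then immediate. So I would take a sequence $(q_j)_{j\in\mathbb N}\subset L_k(\Omega)$ with $q_j\to q$ in $L^\infty(\Omega)$ and show $q\in L_k(\Omega)$, i.e.\ that $k$ remains a resonance frequency. By definition of $L_k(\Omega)$, for each $j$ there is a nontrivial $u_j\in H^1(\Omega)$ solving the homogeneous Neumann problem associated with \eqref{eq:main} for $q_j$,
\begin{equation*}
\int_\Omega \nabla u_j\cdot\overline{\nabla v}\,\mathrm dx \;=\; k^2\int_\Omega q_j\, u_j\,\overline v\,\mathrm dx\qquad\text{for all }v\in H^1(\Omega),
\end{equation*}
and I normalize $\|u_j\|_{H^1(\Omega)}=1$.

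The second step is a weak compactness argument. Since $H^1(\Omega)$ is reflexive, the Banach--Alaoglu theorem yields a subsequence (not relabeled) with $u_j\rightharpoonup u$ weakly in $H^1(\Omega)$, and by the Rellich compact embedding $H^1(\Omega)\hookrightarrow L^2(\Omega)$ we also have $u_j\to u$ strongly in $L^2(\Omega)$. Passing to the limit in the weak formulation is then routine: the left-hand side converges by weak convergence, while for the right-hand side
\begin{equation*}
\|q_j u_j-qu\|_{L^2(\Omega)}\le\|q_j-q\|_{L^\infty(\Omega)}\|u_j\|_{L^2(\Omega)}+\|q\|_{L^\infty(\Omega)}\|u_j-u\|_{L^2(\Omega)}\longrightarrow 0,
\end{equation*}
since $\|q_j\|_{L^\infty(\Omega)}$ and $\|u_j\|_{L^2(\Omega)}$ remain bounded. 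Hence $u$ solves the homogeneous Neumann problem for $q$.

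The main obstacle I anticipate is ruling out the degenerate case $u=0$, since weak convergence alone does not preserve the normalization $\|u_j\|_{H^1(\Omega)}=1$. Here I would use the energy identity obtained by testing with $v=u_j$, namely $\|\nabla u_j\|_{L^2(\Omega)}^2=k^2\int_\Omega q_j\,|u_j|^2\,\mathrm dx\le k^2\|q_j\|_{L^\infty(\Omega)}\|u_j\|_{L^2(\Omega)}^2$. If $u=0$, then $\|u_j\|_{L^2(\Omega)}\to 0$ by the strong $L^2$ convergence, so the identity forces $\|\nabla u_j\|_{L^2(\Omega)}\to 0$ as well, whence $\|u_j\|_{H^1(\Omega)}\to 0$, contradicting the normalization. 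Therefore $u\neq 0$ is a genuine resonant mode for $q$, so $q\in L_k(\Omega)$; this proves $L_k(\Omega)$ is closed and $\mathcal D(\Lambda)$ is open.

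For completeness I note an alternative, purely operator-theoretic route: writing the weak Neumann problem as $(I-S_q)u=f$ in $H^1(\Omega)$, where $S_q\in\mathcal L(H^1(\Omega))$ is compact (again by Rellich) and $q\mapsto S_q$ is Lipschitz continuous from $L^\infty(\Omega)$ into $\mathcal L(H^1(\Omega))$, the Fredholm alternative gives $q\in\mathcal D(\Lambda)$ if and only if $I-S_q$ is invertible, and openness of $\mathcal D(\Lambda)$ then follows from the openness of the set of invertible operators together with the continuity of $q\mapsto I-S_q$. I would nonetheless present the direct compactness argument above, as it is the one that matches the Banach--Alaoglu tool and keeps the resonance characterization explicit.
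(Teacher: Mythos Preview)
Your proof is correct and follows essentially the same route as the paper: sequential closedness of $L_k(\Omega)$ via Banach--Alaoglu, Rellich compactness, and passage to the limit in the weak formulation. The only cosmetic difference is that the paper normalizes the eigenfunctions in $L^2$ (so the energy identity is used up front to obtain the uniform $H^1$ bound, and $u\neq 0$ then follows immediately from $\|u\|_{L^2}=1$), whereas you normalize in $H^1$ and invoke the same energy identity at the end to exclude $u=0$.
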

\begin{proof}
According to  formula  \eqref{eq:D} and the definition of open sets,    it suffices to show that  $L_k(\Omega)$ is a closed set.
To this end, assume there exists a sequence $\{q_n\}\subset L_k(\Omega)$ such that $q_n\rightarrow q^*$ in $L^{\infty}(\Omega)$. We then prove that   $q^* \in L_k(\Omega)$.

Given $q_n(x)\in L_k(\Omega)$, there exists a non-trivial solution $u_n\in H^1(\Omega)$ to the homogeneous equation
\begin{equation}\label{eq:homo}
\begin{cases}
  &\Delta u_n+k^2 q_n u_n=0,\quad \text{in}\ \Omega, \medskip\\
  & \partial_{\nu}u_n=0,  \qquad \qquad \quad  \, \text{on}\ \partial \Omega.
  \end{cases}
\end{equation}
Without loss of generality, we assume $\|q_n\|_{L^{\infty}(\Omega)}\leq 2 \|q^*\|_{L^{\infty}(\Omega)}$, and that  $u_n$ is normalized in $L^2(\Omega)$, such that $\|u_n\|_{L^2(\Omega)}=1$.
Since the last homogeneous equation \eqref{eq:homo}  satisfies the following  variational form
\begin{equation*}
    \int_{\Omega} |\nabla u_{n}|^2-k^2 q_{n} |u_{n}|^2\, \mathrm{d}x=0,
\end{equation*}
one can derive  that
\begin{equation*}
  \|u_n\|_{H^1(\Omega)}=\|u_n\|_{L^2(\Omega)}+\|\nabla u_{n}\|_{L^2(\Omega)}\leq 1+2 k^2 |\Omega|\cdot \|q^*\|_{L^{\infty}(\Omega)} < \infty,
\end{equation*}
 where $|\Omega|$ denotes the volume of $\Omega$. Thus, the sequence $\{u_n\}$ is uniformly bounded in $H^1(\Omega)$.
 By the Banach-Alaoglu theorem \cite{Brezis}, there exists a subsequence $u_{n_s}\in H^1(\Omega)$ and a function $u^*\in H^1(\Omega)$ such that $u_{n_s}\rightharpoonup u^*$   weakly in $H^1(\Omega)$. By the weak convergence $u_{n_s}\rightharpoonup u^*$ in $H^1(\Omega)$,  and the strong convergence $q_{n_s}\rightarrow q^*$ in  $L^{\infty}(\Omega)$ ($\{q_{n_s}\}$ is the corresponding subsequence of $\{q_n\}$),  we obtain
\begin{equation*}
    \int_{\Omega} (\nabla u_{n_s}\cdot \nabla v-k^2 q_{n_s}u_{n_s}v )\, \mathrm{d}x=0, \quad \text{for\,all}\ v\in H^1(\Omega).
\end{equation*}
Passing the limit as $n_s \rightarrow \infty$, one has
\begin{equation*}
    \int_{\Omega} (\nabla u^*\cdot \nabla v-k^2 q^* u^* v )\, \mathrm{d}x=0, \quad \text{for\,all}\ v\in H^1(\Omega),
\end{equation*}
which implies that $ u^*$ is a solution to  the following homogeneous equation
\begin{equation*}
\begin{cases}
  &\Delta u +k^2 q^* u=0,\quad \text{in}\ \Omega, \medskip\\
  & \partial_{\nu}u=0,  \qquad \qquad  \, \text{on}\ \partial \Omega.
  \end{cases}
\end{equation*}
  Furthermore, due to the compact embedding of $H^1(\Omega)$ into $L^2(\Omega)$, one can find that  $u_{n_s}\rightarrow u^*$   strongly in $L^2(\Omega)$. It follows that $u^*$ is not identically zero, as $\|u_{n_s}\|_{L^2(\Omega)}=1$. Therefore,  $u^*$ is a non-trivial solution to the above homogeneous problem corresponding to $q^*$,  which implies that $q^*\in L_k(\Omega)$.
\end{proof}

Next,  we characterize the Fr\'echet derivative of the NtD operator $\Lambda(q)$ associated with the Helmholtz equation in the following lemma.

\begin{lemma}\label{eq:Frechet}
Let $q\in \mathcal{D}(\Lambda)$, then the  NtD operator $\Lambda(q)$ is   Fr\'echet  differentiable and its  derivative
$\Lambda' (q)h\in \mathcal{L}(L^2(\partial \Omega))$ is  associated with  the quadratic form
\begin{equation*}
\int_{\partial \Omega}  g (\Lambda' (q)h) g \, \mathrm{d}s=\int_{\Omega} k^2 h |u|^2 \,\mathrm{d}x.
\end{equation*}
where $h\in L^{\infty}(\Omega)$ denotes a shift term with respect to $q$.
\end{lemma}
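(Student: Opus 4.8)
The plan is to follow the standard route for differentiating a coefficient-to-boundary map: rewrite the forward problem in operator form, combine the Fredholm alternative with Lemma~\ref{lem:open-set} to obtain solution operators that are uniformly bounded near $q$, read off the candidate derivative by linearizing the interior equation, and control the remainder by an energy estimate. Concretely, I would fix $q\in\mathcal{D}(\Lambda)$ and, using the openness proved in Lemma~\ref{lem:open-set}, choose $\varepsilon>0$ so that $q+h\in\mathcal{D}(\Lambda)$ whenever $\|h\|_{L^{\infty}(\Omega)}<\varepsilon$. For such $h$ let $u$ and $u_h$ denote the solutions of \eqref{eq:main} with parameters $q$ and $q+h$, respectively, and the same Neumann data $g$, and set $w_h:=u_h-u$. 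Subtracting the two weak formulations shows that $w_h\in H^1(\Omega)$ is the weak solution of
\begin{equation*}
\Delta w_h+k^2 q\, w_h=-k^2 h\, u_h\quad\text{in }\Omega,\qquad \partial_{\nu}w_h=0\quad\text{on }\partial\Omega .
\end{equation*}

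The key step is a quantitative well-posedness estimate. Writing the bilinear form of \eqref{eq:main} as $A_0-C_q$, where $A_0\in\mathcal{L}(H^1(\Omega),H^1(\Omega)^{*})$ is the isomorphism associated with $\int_{\Omega}(\nabla u\cdot\nabla v+uv)\,\mathrm{d}x$ and $C_q$ corresponds to the lower-order term $\int_{\Omega}(1+k^2 q)uv\,\mathrm{d}x$, the compact embedding $H^1(\Omega)\hookrightarrow L^2(\Omega)$ makes $A_0^{-1}C_q$ a compact operator on $H^1(\Omega)$; the condition $q\notin L_k(\Omega)$ says exactly that $I-A_0^{-1}C_q$ is injective, hence invertible by the Fredholm alternative. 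Since $q\mapsto A_0^{-1}C_q$ is Lipschitz from $L^{\infty}(\Omega)$ into $\mathcal{L}(H^1(\Omega))$ and inversion is continuous on the open set of invertible operators, the corresponding solution operators are defined and uniformly bounded for small $\|h\|_{L^{\infty}(\Omega)}$; in particular $\|u_h\|_{H^1(\Omega)}\le C\|g\|_{L^2(\partial\Omega)}$, and feeding this into the $w_h$-equation gives $\|w_h\|_{H^1(\Omega)}\le C\|h\|_{L^{\infty}(\Omega)}\|g\|_{L^2(\partial\Omega)}$.

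To identify the derivative, let $\dot u\in H^1(\Omega)$ solve the linearized problem $\Delta\dot u+k^2 q\,\dot u=-k^2 h\,u$ in $\Omega$, $\partial_{\nu}\dot u=0$ on $\partial\Omega$, which is well posed and depends linearly and boundedly on $h$. Then $w_h-\dot u$ solves the same type of equation with right-hand side $-k^2 h\,(u_h-u)=-k^2 h\,w_h$, so the estimate above yields $\|w_h-\dot u\|_{H^1(\Omega)}\le C\|h\|_{L^{\infty}(\Omega)}\|w_h\|_{L^2(\Omega)}\le C\|h\|_{L^{\infty}(\Omega)}^2\|g\|_{L^2(\partial\Omega)}$. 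By the trace theorem this shows that $g\mapsto\dot u|_{\partial\Omega}$ is the Fr\'echet derivative $\Lambda'(q)h\in\mathcal{L}(L^2(\partial\Omega))$, with $\|\Lambda(q+h)-\Lambda(q)-\Lambda'(q)h\|_{\mathcal{L}(L^2(\partial\Omega))}=O(\|h\|_{L^{\infty}(\Omega)}^2)$, and $h\mapsto\Lambda'(q)h$ is evidently linear and bounded. For the quadratic form I would test the weak form of the $\dot u$-equation with $v=u$ and the weak form of \eqref{eq:main} with $v=\dot u$; the two left-hand sides coincide by symmetry of $\int_{\Omega}(\nabla u\cdot\nabla\dot u-k^2 q\,u\dot u)\,\mathrm{d}x$, and since $q$, $k$, $g$ are real the solution $u$ is real-valued, whence
\begin{equation*}
\int_{\partial\Omega}g\,(\Lambda'(q)h)\,g\,\mathrm{d}s=\int_{\partial\Omega}g\,\dot u\,\mathrm{d}s=\int_{\Omega}k^2 h\,|u|^2\,\mathrm{d}x .
\end{equation*}

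I expect the main obstacle to be the second paragraph: because the Helmholtz equation is not coercive but only a compact perturbation of a coercive one, Lax--Milgram is unavailable, and one must use the Fredholm alternative together with the openness of $\mathcal{D}(\Lambda)$ to guarantee that the solution operators stay uniformly bounded --- indeed norm-continuous --- in a neighborhood of $q$. Once this is secured, the remainder estimate and the quadratic-form identity are routine.
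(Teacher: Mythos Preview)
Your proposal is correct and complete. The approach, however, differs from the paper's. The paper splits the bilinear form as $a(q;u,v)=b(q;u,v)+c(u,v)$ with $b(q;u,v)=-\int_\Omega k^2 q u v\,\mathrm{d}x$ trilinear in $(q,u,v)$, and then invokes \cite[Lemma~2.1]{Lechleiter08}, an abstract result guaranteeing Fr\'echet differentiability of such parameter-to-boundary maps once the domain $\mathcal{D}(\Lambda)$ is open; the derivative is characterized by $a(q;(\Lambda'(q)h)g,v)=-b(h;\Lambda(q)g,v)$, from which the quadratic form follows by taking $v=u$. Your argument instead builds the differentiability from scratch: you write the problem as a Fredholm perturbation of a coercive one, use continuity of inversion to get uniform solution bounds near $q$, define the linearized solution $\dot u$ directly, and establish the $O(\|h\|_{L^\infty}^2)$ remainder estimate by iterating the a~priori bound. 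The quadratic-form step is essentially the same in both proofs (symmetry of the bilinear form). What your route buys is self-containment---no external lemma is needed, and the mechanism behind the differentiability (Fredholm stability under small perturbations) is made explicit---at the cost of being longer. The paper's route is shorter but hides the analysis inside the cited reference.
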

\begin{proof}
The equivalent weak formulation of \eqref{eq:main} is to find $u\in H^1(\Omega)$
such that
\begin{equation}\label{eq:variation}
  \int_{\Omega} (\nabla u\cdot \nabla v-k^2 quv )\, \mathrm{d}x=\int_{\partial \Omega} g v \,\mathrm{d}s,\quad \text{for\,all}\ v\in H^1(\Omega).
\end{equation}
Let $ a(q; u,v)$ denote the left-hand side of the variational formula \eqref{eq:variation}.  It can be expressed   as a sum of a trilinear  form $b(q;u,v)$ and a bilinear form $c(u,v)$,
\begin{equation*}\label{eq:variational}
a(q; u,v):=b(q;u,v)+c(u,v),
\end{equation*}
where
\begin{equation}\label{eq:linear-form}
b(q;u,v)= - \int_{\Omega} k^2 quv\, \mathrm{d}x,  \quad  \text{and} \quad c(u,v)= \int_{\Omega} \nabla u\cdot \nabla v \, \mathrm{d}x.
\end{equation}
 Since $\mathcal{D}(\Lambda)$ is an open set by Lemma \ref{lem:open-set},  it follows  from \cite[Lemma 2.1]{Lechleiter08} that the NtD operator  $\Lambda(q)$  is  Fr\'echet  differentiable,  and its Fr\'echet derivative
$\Lambda' (q)$ satisfies
\begin{equation*}
  a(q; (\Lambda' (q)h)g,v)=-b(h; \Lambda(q)g,v), \quad \text{for\,all}\ v\in H^1(\Omega).
\end{equation*}

Noting that $u=\Lambda(q) g$,  and using \eqref{eq:variation} with $v=u$, one can find that
\begin{equation*}
a(q;\Lambda(q) g, u)=\int_{\partial \Omega} g u \, \mathrm{d}s= \int_{\partial \Omega} g \Lambda(q) g \, \mathrm{d}s.
\end{equation*}
Furthermore,  replacing  $\Lambda(q)$ with $\Lambda'(q)h$, we obtain
\begin{equation}\label{eq:a2}
a(q; (\Lambda'(q)h) g, u)= \int_{\partial \Omega} g (\Lambda'(q)h) g \, \mathrm{d}s.
\end{equation}
Thus, by combining equations \eqref{eq:linear-form}--\eqref{eq:a2},  we  deduce that
\begin{equation*}
\int_{\partial \Omega}  g (\Lambda' (q)h) g \, \mathrm{d}s=a(q; (\Lambda'(q)h) g, u)=-b(h; \Lambda(q)g, u)=\int_{\Omega} k^2 h |u|^2 \,\mathrm{d}x,
\end{equation*}
which completes the proof.
\end{proof}

With the Fr\'echet derivative of NtD operator presented in Lemma \ref{eq:Frechet},  we now establish the monotonicity relationship between the refractive index $q$ and the NtD operator $\Lambda(q)$.

\begin{theorem}\label{monotonicity-relation}
Let  $q \in  \mathcal{D}(\Lambda)$ be the refractive index of the scatterer   and let  $d(q)\in \mathbb{N}$  be the number of negative Neumann eigenvalues of the operator
$-(\Delta+k^2 q)$. Then there exists a subspace $V_1\subset L^2(\partial \Omega)$ with $\dim(V_1)\leq  d(q)$ such that
\begin{equation*}\label{eq:monoto1}
   \int_{\partial \Omega} g \left(\Lambda (q)-\Lambda (q_0)-\Lambda' (q_0)(q-q_0) \right)g \,  \mathrm{d}s \geq 0, \quad \text{for all}\ g\in V_1^{\bot},
\end{equation*}
where $q_0$ denotes the background refractive index.
\end{theorem}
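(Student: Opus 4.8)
The plan is to reduce the monotonicity statement to a pointwise comparison of the relevant quadratic forms and then exploit the spectral structure of the Helmholtz operator. First I would write out the ``monotonicity-with-defect'' inequality in a convenient form: using the weak formulation \eqref{eq:variation} for the coefficients $q$ and $q_0$ with the \emph{same} Neumann data $g$, a standard subtraction argument (the ``Monotonicity inequality'' computation, cf.\ \cite{Harrach13SIMA, Harrach19A-PDE}) yields, for the solutions $u_q$ and $u_{q_0}$ of \eqref{eq:main},
\begin{equation*}
\int_{\partial\Omega} g\bigl(\Lambda(q)-\Lambda(q_0)\bigr)g\,\mathrm{d}s \;\geq\; \int_\Omega k^2 (q-q_0)\,|u_{q_0}|^2\,\mathrm{d}x \;-\; R,
\end{equation*}
where $R$ is a remainder controlled by $\int_\Omega k^2(q-q_0)|u_q-u_{q_0}|^2$; combined with Lemma~\ref{eq:Frechet}, which identifies $\int_{\partial\Omega} g\,(\Lambda'(q_0)(q-q_0))g\,\mathrm{d}s = \int_\Omega k^2(q-q_0)|u_{q_0}|^2\,\mathrm{d}x$, this shows that the left-hand side of the theorem equals (up to the controllable remainder) a quadratic form that is nonnegative \emph{provided} $q-q_0$ has the right sign where $|u_{q_0}|$ is large. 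In the genuinely indefinite Helmholtz case one cannot get sign-definiteness outright, so the second step is to localize the obstruction.

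The key tool is the spectral decomposition of $-(\Delta + k^2 q)$ with Neumann boundary conditions: since $q\in\mathcal{D}(\Lambda)$, the operator has no zero eigenvalue, exactly $d(q)$ negative eigenvalues (counted with multiplicity) with eigenfunctions $\phi_1,\dots,\phi_{d(q)}\in H^1(\Omega)$, and the rest of the spectrum positive. I would then consider the finite-dimensional space $V_1 \subset L^2(\partial\Omega)$ spanned by the Neumann traces (equivalently, by $\Lambda(q_0)^{-1}$ applied to suitable combinations, or directly by the normal-derivative data associated with the negative eigenmodes) — concretely, take $V_1$ to be the span of the data $g$ for which the solution operator ``sees'' the negative part of the spectrum. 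On the orthogonal complement $V_1^\perp$ the associated sesquilinear form $a(q;\cdot,\cdot)$ restricted to the relevant solution space is positive, so the Lax–Milgram/Fredholm machinery gives coercivity there, and the remainder term $R$ is absorbed, leaving the claimed inequality for all $g\in V_1^\perp$. The bound $\dim(V_1)\le d(q)$ is exactly the count of negative eigenvalues fed into the construction.

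The main obstacle, and the place where the Helmholtz case genuinely differs from the stationary conductivity problem, is showing that the form is coercive (not merely nonnegative) on the complement of a space of dimension at most $d(q)$, which is needed to absorb the quadratic remainder $R$ and to pass from a ``linearized'' nonnegativity to the full nonnegativity of $\Lambda(q)-\Lambda(q_0)-\Lambda'(q_0)(q-q_0)$. This requires a careful min–max / Courant–Fischer argument relating the number of negative eigenvalues of $-(\Delta+k^2q)$ to the codimension of the subspace of boundary data on which the Neumann–Dirichlet quadratic form dominates the volume term $\int_\Omega k^2(q-q_0)|u|^2$; one must be attentive that the eigenfunctions live in $\Omega$ while $V_1$ lives on $\partial\Omega$, so the correspondence is mediated by the (compact perturbation of a) trace map, and one has to verify that no spurious reduction in dimension occurs. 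I expect the remaining steps — the subtraction identity, the invocation of Lemma~\ref{eq:Frechet}, and the bookkeeping of signs — to be routine once this spectral-to-boundary dimension count is in place.
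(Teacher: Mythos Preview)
Your overall route coincides with what the paper does, except that the paper outsources the hard step: it simply invokes \cite[Theorem~3.5]{Harrach19A-PDE} for the inequality
\[
\int_{\partial\Omega} g\bigl(\Lambda(q)-\Lambda(q_0)\bigr)g\,\mathrm{d}s \;\ge\; \int_\Omega k^2(q-q_0)\,|u_{q_0}|^2\,\mathrm{d}x
\quad\text{for all }g\in V_1^\perp,\ \dim V_1\le d(q),
\]
and then applies Lemma~\ref{eq:Frechet} to identify the right-hand side with $\langle g,\Lambda'(q_0)(q-q_0)\,g\rangle$. You are essentially sketching a proof of that cited theorem, so there is no methodological divergence.

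There is, however, a concrete misstep in your sketch that would derail the execution. The subtraction computation yields an \emph{identity}, not an inequality with a remainder to be absorbed:
\[
\int_{\partial\Omega} g\bigl(\Lambda(q)-\Lambda(q_0)\bigr)g\,\mathrm{d}s \;-\; \int_\Omega k^2(q-q_0)\,|u_{q_0}|^2\,\mathrm{d}x
\;=\; a(q;w,w)\;=\;\int_\Omega \bigl(|\nabla w|^2 - k^2 q\,|w|^2\bigr)\,\mathrm{d}x,
\]
with $w=u_q-u_{q_0}$. The right-hand side is \emph{not} controlled by $\int_\Omega k^2(q-q_0)|w|^2$ as you wrote, and no coercivity is needed to ``absorb'' anything: the only requirement is $a(q;w,w)\ge 0$. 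Since $a(q;\cdot,\cdot)$ has exactly $d(q)$ negative directions in $H^1(\Omega)$ and $g\mapsto w(g)$ is linear, the pulled-back quadratic form $g\mapsto a(q;w(g),w(g))$ on $L^2(\partial\Omega)$ has at most $d(q)$ negative directions as well, hence is nonnegative on $V_1^\perp$ for some $V_1$ with $\dim V_1\le d(q)$. This last step is pure linear algebra on quadratic forms; the trace-map subtlety you flag does not actually enter.
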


\begin{proof}

Let $u_{q_0}$ be the solution to the following equation
 \begin{equation*}
  \begin{cases}
  &\Delta u+k^2 q_0 u=0,\quad \text{in}\ \Omega, \medskip\\
  & \partial_{\nu}u=g,  \qquad \qquad  \,  \, \, \text{on}\ \partial \Omega.
  \end{cases}
\end{equation*}
From \cite[Theorem 3.5]{Harrach19A-PDE}, one can find that  there exists a subspace $V_1\subset L^2(\partial \Omega)$ with $\dim(V_1)\leq  d(q)$ such that
\begin{equation*}
   \int_{\partial \Omega} g \left(\Lambda (q)-\Lambda (q_0) \right) g \,  \mathrm{d}s \geq   \int_{ \Omega}  k^2 (q-q_0) \left|u_{q_0}\right|^2 \,  \mathrm{d}s, \quad \text{for all}\ g\in V_1^{\bot}.
\end{equation*}
Therefore, combining the last formula and  Lemma \ref{eq:Frechet},  one can find that
 \begin{equation*}
   \int_{\partial \Omega} g \left(\Lambda (q)-\Lambda (q_0)-\Lambda' (q_0)(q-q_0) \right)g \,  \mathrm{d}s \geq 0, \quad \text{for all}\ g\in V_1^{\bot},
\end{equation*}
 which completes the proof.
\end{proof}

\begin{remark}
According to Theorem \ref{monotonicity-relation}, by interchanging the refractive index $q_0$ and $q$,  one can deduce that there exists a subspace  $V_2\subset L^2(\partial \Omega)$ with $\dim(V_2)\leq  d(q_0)$  such that
\begin{equation}\label{eq:monoto2}
   \int_{\partial \Omega} g \left(\Lambda (q)-\Lambda (q_0)-\Lambda' (q)(q-q_0) \right)g \,  \mathrm{d}s \leq 0, \quad \text{for all}\ g\in V_2^{\bot}.
\end{equation}

\end{remark}

\begin{remark}
Theorem \ref{monotonicity-relation} states that  the operator $\Lambda (q)-\Lambda (q_0)-\Lambda' (q_0)(q-q_0)$  has finitely many negative eigenvalues,  with an upper bound of $d(q)$.
But   the bound on $d(q)$ is typically unavailable in practice since the refractive index $q$ is  unknown.  In fact, as shown in \cite[Corollary 3.11]{Harrach19A-PDE},  one can take $q_{\max}$ to be the essential supremum of  $q(x)$, so that  $d(q_{\max})$  provides an upper bound for
$d(q)$.  However, this bound  is excessively large and may reduce the accuracy of the inversion. To address this  issue,  we will introduce a strategy  to estimate the  upper bound for $d(q)$ in Section \ref{subsec:3.1}.
\end{remark}

\section{A Monotonicity-based regularization method}

In this section,   we introduce a  monotonicity-based regularization method to determine the shape of the unknown scatterer  from the measured data $\mathcal{F}_n(q)$, as defined in \eqref{eq:Fn},  by  utilizing  the monotonicity relation established in Theorem  \ref{monotonicity-relation}.
Rigorous theoretical justifications of the convergence and stability are provided to verify the validity of the proposed method. Before our discussion, we introduce a model setting used to estimate an upper bound for $d(q)$.

\subsection{Model setting}\label{subsec:3.1}

  For the Helmholtz equation \eqref{eq:main},
we assume that the background refractive index $q_0\in \mathbb{R}_+$ is known as {\it a priori} information and the refractive index of the investigated object is defined as $q:=q_0+\gamma \chi_D$, where $\overline{D}\subset \Omega$ represents the shape of the  target and $\chi_{D}$ represents the characteristic function of $D$.  Here,  we further assume that  $\gamma\in L_+^{\infty}(D)$, where $L_+^{\infty}(D)$ is a subspace of $L^{\infty}(D)$ with positive essential infima.
Without loss generality, we suppose that
\begin{equation*}
\begin{aligned}
  &q_0< q_{\min}\leq q(x) \leq q_{\max},  \quad \text{in}\  D,\\
  & q(x)=q_0, \qquad \qquad\qquad \quad \  \text{in}\ \Omega \backslash D.
  \end{aligned}
\end{equation*}
Here $q_{\min}$ and $q_{\max}$ respectively represent as the infimum and supremum of $q(x)$.
Since the exact shape $D$ is also unknown, we assume the existence of a domain $\Omega_0\subset \Omega$ such that
 $D\subset \Omega_0$. Furthermore, we consider the following Helmholtz equation
\begin{equation}\label{eq:modified-Helmholtz}
  \Delta u +k^2 \widetilde{q} u=0,  \quad \text{in}\ \Omega,
\end{equation}
where the refractive index is assumed to be a radially symmetric, piecewise constant function
\begin{equation}\label{eq:q-tilde}
 \widetilde{ q}(x)=
  \begin{cases}
  q_{\max},   &x\in \Omega_0\subset \Omega, \medskip\\
  q_0, & x\in \Omega\backslash\Omega_0.
  \end{cases}
\end{equation}

According to  \cite[Lemma 3.9]{Harrach19A-PDE}, one can find that $d(q)\leq d(\widetilde{q})\leq d(q_{\max})$ whenever  $q(x)\leq \widetilde{q}(x)\leq q_{\max}$ for all $x\in \Omega$. Therefore,  a sharper upper bound $d(\widetilde{q})$ for $d(q)$ can be obtained.
Numerically,  one can utilize the finite element method  to  determine $d(\widetilde{q})$ by  computing the number of negative eigenvalues of the  stiffness matrix associated with the operator $-(\Delta+k^2 \widetilde{q})$,  since $d(\widetilde{q})$  is the number of negative Neumann eigenvalues of this operator.

\subsection{Monotonicity-based regularization}

In order to reconstruct the difference $q-q_0$ or $\gamma \chi_D$,  we compare the measurements $\mathcal{F}_n(q)$  with   $\mathcal{F}_n(q_0)$ for the  background refractive index $q_0$.
To this end, we apply a single linearization step
\begin{equation}\label{eq:appro-F}
\mathcal{F}'_n (q_0) (q-q_0)\approx \mathcal{F}_n(q) - \mathcal{F}_n(q_0),
\end{equation}
where $\mathcal{F}'_n (q_0): L^{\infty}(\Omega)\to \mathbb{S}_n\subset \mathbb{R}^{n\times n}$ denotes the Fr\'echet derivative of $ \mathcal{F}_n (q)$ at $q_0$, given by
\begin{equation}\label{eq:Fn-derivative}
 \mathcal{F}'_n(q_0): h\mapsto  \left(\int_{\Omega} k^2 h\,  u_{q_0}^{(g_i)}   u_{q_0}^{(g_j)} \,\mathrm{d}x \right)_{i,j=1}^n.
\end{equation}
To solve the above problem, we discretize the whole domain  $\overline{\Omega}=\bigcup_{m=1}^M \overline{P}_m$ into $M$ disjoint open pixels $P_m$. We approximate the difference $h\approx q-q_0$ using a piecewise constant ansatz
\begin{equation*}
   h=\sum_{m=1}^M a_m \chi_{P_m} (x).
\end{equation*}
Then the formula \eqref{eq:appro-F} can be rewritten as the following linear equation
\begin{equation*}
\mathcal{F}'_n (q_0) h= \mathcal{F}_n(q) - \mathcal{F}_n(q_0).
\end{equation*}
Furthermore, we define the residual functional as
\begin{equation*}
  \bm R(h):=\mathcal{F}_{n}(q)-\mathcal{F}_{n}(q_0)- \mathcal{F}_{n}'(q_0) h.
\end{equation*}

Next, we present the  monotonicity-based regularization approach for reconstructing the shape of the unknown target.   The main idea of the monotonicity-based regularization approach is to minimize the linearized residual functional subject to a constraint on $h$, which is derived from the monotonicity properties established in Theorem~\ref{monotonicity-relation}.
Let $\lambda_1 (  \bm R(h) )\geq \lambda_2(  \bm R(h) )\geq \ldots \geq \lambda_n(  \bm R(h) )$ denote the $n$ eigenvalues of $\bm R (h)$ in descending order. To  determine $h$,  we  consider the following minimization problem with the sum of all  positive eigenvalues of $\bm R (h)$ as:
\begin{equation}\label{eq:minimizer}
  \min_{h\in \mathcal{A}} \left(  \sum_{\{j: \, \lambda_j>0\}} \lambda_j(R(h)) \right),
 \end{equation}
where $\mathcal{A}$ denotes an admissible set of $h$, defined by
\begin{equation*}
  \mathcal{A}:=\left\{h\in L^{\infty}(\Omega): \ h=\sum_{m=1}^M a_m \chi_{P_m} (x),  \ 0\leq a_m\leq \min(q_{\min}-q_0, \beta_m)   \right\},
\end{equation*}
with
 \begin{equation}\label{eq:beta}
   \beta_m:=\max\left \{\alpha\geq 0:    \Lambda(q)-  \Lambda(q_0)- \alpha \Lambda'(q_0) \chi_{P_m}  \geq_{d(\widetilde{q})} 0\right\}.
 \end{equation}
Here  the  linear constraint $\beta_m$ is defined by a monotonicity test, which also serves as a special regularizer.

\begin{remark}
In the minimization problem \eqref{eq:minimizer},  we minimize the residual functional using the sum of all positive eigenvalues, rather than the Frobenius norm of the operator $\bm R(h)$.  This choice is motivated by the fact that minimizing the Frobenius norm does not guarantee global convergence for the Helmholtz equation, in contrast to the cases of electrical impedance tomography (EIT) and stationary elasticity  \cite{Harrach16IP,  Harrach21, Eberle2022}.
\end{remark}

\subsection{Convergence analysis}
 In what follows, we  show that the support of the minimizer  $h$ in \eqref{eq:minimizer} agrees with the scatterer's shape $D$.  Before our discussion, we introduce the following two lemmas.

\begin{lemma}\label{lemma1}
Let $\beta_m$ be defined in \eqref{eq:beta}. For any pixel $P_m$, it holds that $P_m\subseteq D$ if and only if $\beta_m>0$.
Moreover, for $P_m\subseteq D$, it holds that $\beta_m\geq q_{\min}-q_0$.
\end{lemma}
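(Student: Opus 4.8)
The plan is to prove both directions of the equivalence $P_m \subseteq D \iff \beta_m > 0$ using the monotonicity relations from Theorem~\ref{monotonicity-relation} and the subsequent remark, together with the Fr\'echet derivative formula from Lemma~\ref{eq:Frechet}. The key observation is that $\beta_m$ is a threshold measuring how much of the ``true'' monotone decrease $\Lambda(q) - \Lambda(q_0)$ we can absorb by a multiple of $\Lambda'(q_0)\chi_{P_m}$, modulo a finite-dimensional subspace of codimension at most $d(\widetilde q)$.

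First I would establish the quantitative lower bound, which simultaneously yields the ``if'' direction. Suppose $P_m \subseteq D$. Combining the two monotonicity estimates — from Theorem~\ref{monotonicity-relation} applied with $q$ and $q_0$ interchanged (equation \eqref{eq:monoto2}) and from the definition of the Fr\'echet derivative — one sees that for $g$ in a subspace of finite codimension,
\begin{equation*}
  \int_{\partial\Omega} g\left(\Lambda(q) - \Lambda(q_0) - \alpha\,\Lambda'(q_0)\chi_{P_m}\right) g \,\mathrm{d}s
  \;\geq\; \int_{\Omega} k^2\bigl((q-q_0) - \alpha\chi_{P_m}\bigr)\,\bigl|u_{q_0}^{(g)}\bigr|^2 \,\mathrm{d}x.
\end{equation*}
On $P_m \subseteq D$ we have $q - q_0 = \gamma \chi_D \geq q_{\min} - q_0$, so choosing any $\alpha \leq q_{\min} - q_0$ makes the integrand on the right nonnegative (it is also nonnegative — in fact zero — outside $D$, and nonnegative on $D \setminus P_m$). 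Hence the operator is positive semidefinite modulo a subspace of codimension $\leq d(q_0)$; since $d(q_0) \leq d(\widetilde q)$, this shows $\beta_m \geq q_{\min} - q_0 > 0$, proving the ``Moreover'' clause and the reverse implication at once. I would need to check carefully that the codimension bound coming out of \eqref{eq:monoto2} is indeed $\leq d(\widetilde q)$ — this uses the monotonicity of $d(\cdot)$ and the model setting of Section~\ref{subsec:3.1}.

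For the ``only if'' direction (contrapositive: $P_m \not\subseteq D$ implies $\beta_m = 0$), I would argue that if $P_m$ is not contained in $D$ then the set $P_m \setminus \overline{D}$ has positive measure, and there $q - q_0 = 0$ while $\chi_{P_m} = 1$. Using a localized-potentials / Runge-type argument — or more directly the linearized monotonicity test in the style of \cite{Harrach13SIMA, Harrach19A-PDE} — one constructs, for any $\alpha > 0$ and any finite-dimensional subspace $V$, a boundary datum $g \in V^\perp$ whose solution $u_{q_0}^{(g)}$ concentrates its $L^2$ mass on $P_m \setminus \overline D$ while the contribution from $D$ stays controlled; this forces
\begin{equation*}
  \int_{\partial\Omega} g\left(\Lambda(q) - \Lambda(q_0) - \alpha\,\Lambda'(q_0)\chi_{P_m}\right) g \,\mathrm{d}s < 0,
\end{equation*}
so no positive $\alpha$ satisfies the constraint in \eqref{eq:beta}, i.e. $\beta_m = 0$. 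The main obstacle, and the step I expect to require the most care, is exactly this construction of concentrating solutions in $V^\perp$ for an \emph{arbitrary} finite-codimension subspace $V$ of dimension up to $d(\widetilde q)$: one must ensure the localized potentials still exist after removing finitely many directions, which is where the interplay between the localized-potentials theorem for the Helmholtz equation and the finite-codimension caveat becomes delicate. I would handle this by invoking the localized-potentials results available for the Helmholtz setting (as used in the proof of the monotonicity relation itself) and noting that a finite-dimensional constraint cannot obstruct the blow-up of the energy ratio.
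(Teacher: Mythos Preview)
Your overall strategy matches the paper's, but you have the two monotonicity inequalities swapped between the two steps, and this creates a real gap in the contrapositive direction.

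In Step~1 the inequality you display,
\[
\langle g,\,(\Lambda(q)-\Lambda(q_0)-\alpha\Lambda'(q_0)\chi_{P_m})g\rangle \;\geq\; \int_\Omega k^2\bigl((q-q_0)-\alpha\chi_{P_m}\bigr)\,|u_{q_0}^{(g)}|^2 \,\mathrm{d}x,
\]
is correct, but it follows from Theorem~\ref{monotonicity-relation} \emph{as stated} (lower bound, derivative at $q_0$, codimension $\leq d(q)$), not from the swapped version \eqref{eq:monoto2} you cite (upper bound, derivative at $q$, codimension $\leq d(q_0)$). The paper uses the unswapped theorem here and records the codimension as $d(q)\leq d(\widetilde q)$. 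Since $d(q_0)\leq d(\widetilde q)$ as well, your conclusion survives, but the source and the codimension are misattributed.

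In Step~2 the issue is substantive. To force the quadratic form negative you need an \emph{upper} bound on $\Lambda(q)-\Lambda(q_0)$, and that is exactly where \eqref{eq:monoto2} belongs: it gives, on a finite-codimension subspace,
\[
\langle g,\,(\Lambda(q)-\Lambda(q_0))g\rangle \;\leq\; \int_D k^2(q-q_0)\,|u_q^{(g)}|^2 \,\mathrm{d}x,
\]
with the solution $u_q$, not $u_{q_0}$. Your localized-potentials argument controls $\int_{P_m}|u_{q_0}|^2$ against $\int_D|u_{q_0}|^2$, so a conversion $\int_D|u_q|^2\leq C\int_D|u_{q_0}|^2$ is still required before the blow-up/vanishing contradiction can close. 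The paper invokes a separate comparison estimate (\cite[Theorem~4.2]{Harrach19A-PDE}) for this, and only then applies the localized potentials (\cite[Theorem~4.1]{Harrach19A-PDE}). Your phrase ``the contribution from $D$ stays controlled'' hides precisely this two-step mechanism: the available upper bound is in terms of the wrong solution, and an additional result is needed to bridge the gap.
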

\begin{proof}
Step 1:
We prove that $P_m\subseteq D$ implies  $\beta_m\geq q_{\min}-q_0>0$.
Let $P_m\subseteq D$, all $\alpha\in [0,\,q_{\min}-q_0 ]$. By Theorem~\ref{monotonicity-relation} there exists a subspace $V\subset L^2(\partial \Omega)$ with $\dim(V)\leq  d(q)\leq d(\widetilde{q})$,
so that, for all $g\in V^{\bot}$,
\begin{equation*}
\begin{aligned}
 &\int_{\partial \Omega} g (\Lambda (q)-\Lambda (q_0)- \Lambda'(q_0) \alpha \chi_{P_m} )g\,  \mathrm{d}s\\
&\quad\geq \int_{\partial \Omega} g ( \Lambda'(q_0)(q-q_0) - \Lambda'(q_0) \alpha \chi_{P_m} )g\,  \mathrm{d}s\\
&\quad\geq \int_{D} k^2 (q-q_0) |u_{q_0}^{(g)}|^2\, \mathrm{d}x - \int_{P_m} k^2 (q_{\min}-q_0) |u_{q_0}^{(g)}|^2\, \mathrm{d}x\\
&\quad \geq  0.
  \end{aligned}
\end{equation*}
Hence,  according to  the definition of $\beta_m$,  one can find that $\beta_m \geq q_{\min}-q_0>0$.

Step 2:  We will demonstrate that if  $\beta_m>0$, then $P_m\subseteq D$;  conversely, if $P_m \not \subseteq D$, then $\beta_m=0$.
By the definition of $\beta_m$,  this is equivalent  to the statement that   for all $\alpha>0$,  the inequality $\Lambda(q)-\Lambda(q_0)-\alpha \Lambda'(q_0)\chi_{P_m} \not \geq_{d(\widetilde q)}  0$ holds.  In what follows, we prove this by contradiction: assume that $P_m \not \subseteq D$,  there exists $\alpha>0$ such that $\Lambda(q)-\Lambda(q_0)-\alpha \Lambda'(q_0)\chi_{P_m}  \geq_{d(\widetilde q)}  0$.
Then there exists a subspace $V\subset L^2(\partial \Omega)$ with $\dim(V)=d(\widetilde q)$ such that for all $g\in V^{\perp}$, it holds that
\begin{equation}\label{eq:monoto-c1}
 \int_{\partial \Omega} g (\Lambda (q)-\Lambda (q_0)-\Lambda'(q_0) \alpha  \chi_{P_m} )g\,  \mathrm{d}s \geq 0.
\end{equation}
By combining  inequalities \eqref{eq:monoto-c1} and  \eqref{eq:monoto2}, there exists a subspace $\widetilde{V}\subset L^2(\partial \Omega)$  with $ V \subset \widetilde{V} $  and  $d(\tilde q)\leq \dim(\widetilde V)\leq d(\tilde q)+ d(q_0)$, such that
\begin{equation}\label{eq:con0}
\begin{aligned}
 0&\leq \int_{\partial \Omega}  g (\Lambda (q)-\Lambda (q_0)-\Lambda'(q_0) \alpha  \chi_{P_m} ) g\,  \mathrm{d}s \\
 &\leq   \int_{\Omega} k^2 (q-q_0) |u_q^{(g)}|^2\, \mathrm{d}x -\int_{\Omega} k^2 \alpha \chi_{P_m} |u_{q_0}^{( g)}|^2\, \mathrm{d}x\\
 &\leq  k^2 (q_{\max}-q_0)  \int_{D} |u_q^{( g)}|^2\, \mathrm{d}x- k^2 \alpha \int_{P_m} |u_{q_0}^{(g)}|^2\, \mathrm{d}x
\end{aligned}
\end{equation}
 for all $g\in \widetilde V^{\perp}$.  By \cite[Theorem 4.2]{Harrach19A-PDE},  there exists a constant $C>0$ such that
\begin{equation}\label{eq:con1}
  k^2 (q_{\max}-q_0)  \int_{D} |u_{q}^{({g})}|^2\, \mathrm{d}x\leq C  k^2 (q_{\max}-q_0)  \int_{D} |u_{q_0}^{({g})}|^2\, \mathrm{d}x
\end{equation}
for all  $g\in \widetilde V^{\perp}$. Combining  inequalities \eqref{eq:con0} and \eqref{eq:con1}, one can find that
\begin{equation}\label{eq:con2}
   0\leq  C k^2 (q_{\max}-q_0)  \int_{D} |u_q^{( g)}|^2\, \mathrm{d}x- k^2 \alpha \int_{P_m} |u_{q_0}^{(g)}|^2\, \mathrm{d}x
\end{equation}
for all  $g\in \widetilde V^{\perp}$. Noting that $P_m \not \subset D$, according to localized potentials \cite[Theorem 4.1]{Harrach19A-PDE},  there exists a sequence  $g_m\in  V^{\bot}$ such that
\begin{equation*}
  \lim_{m\rightarrow \infty}  \int_{P_m} |u_{q_0}^{(g_m)}|^2\, \mathrm{d}x \rightarrow \infty, \quad   \lim_{m\rightarrow \infty}  \int_{D} |u_{q_0}^{(g_m)}|^2\, \mathrm{d}x \rightarrow 0.
\end{equation*}
This contradicts the inequality \eqref{eq:con2} and the proof completes.
\end{proof}

\begin{lemma}\label{lemma2}
 For all pixels $P_m$,  the matrix $\mathcal{F}'_n (q_0)\chi_{P_m}\subset \mathbb{R}^{n\times n}$  is a positive definite matrix.
\end{lemma}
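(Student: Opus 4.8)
The idea is to recognize $M:=\mathcal{F}'_n(q_0)\chi_{P_m}$ as a Gram-type matrix and then upgrade positive semidefiniteness to positive definiteness by a unique continuation argument. By the explicit formula \eqref{eq:Fn-derivative}, the $(i,j)$ entry of $M$ is $\int_{P_m} k^2\, u_{q_0}^{(g_i)}\, u_{q_0}^{(g_j)}\,\mathrm{d}x$, which is symmetric in $i,j$, so $M\in\mathbb{S}_n$. For an arbitrary $\xi=(\xi_1,\dots,\xi_n)^{\top}\in\mathbb{R}^n$ I would set $g_\xi:=\sum_{i=1}^n\xi_i g_i\in L^2(\partial\Omega)$ and use the linearity of the solution operator $g\mapsto u_{q_0}^{(g)}$ (well defined because $q_0\in\mathcal{D}(\Lambda)$) to get $\sum_{i}\xi_i u_{q_0}^{(g_i)}=u_{q_0}^{(g_\xi)}$. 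Expanding the quadratic form then gives
\[
 \xi^{\top} M\,\xi \;=\; \int_{P_m} k^2\Bigl(\sum_{i=1}^n\xi_i u_{q_0}^{(g_i)}\Bigr)^2\,\mathrm{d}x \;=\; k^2\int_{P_m}\bigl|u_{q_0}^{(g_\xi)}\bigr|^2\,\mathrm{d}x\;\ge\;0,
\]
so $M$ is positive semidefinite.

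It then remains to rule out a nontrivial kernel. Suppose $\xi^{\top}M\xi=0$; then $u_{q_0}^{(g_\xi)}$ vanishes almost everywhere on the pixel $P_m$. Since $P_m$ is a nonempty open subset of the connected domain $\Omega$ and $u_{q_0}^{(g_\xi)}\in H^1(\Omega)$ solves $\Delta u+k^2 q_0 u=0$, the (strong) unique continuation principle for the Helmholtz equation forces $u_{q_0}^{(g_\xi)}\equiv 0$ in all of $\Omega$. Plugging $u_{q_0}^{(g_\xi)}\equiv 0$ into the weak formulation \eqref{eq:variation} yields $\int_{\partial\Omega}g_\xi v\,\mathrm{d}s=0$ for every $v\in H^1(\Omega)$, hence $g_\xi=0$ in $L^2(\partial\Omega)$. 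Because $\{g_1,\dots,g_n\}$ is orthonormal, and in particular linearly independent, $\sum_i\xi_i g_i=0$ implies $\xi=0$. This shows $\xi^{\top}M\xi>0$ for all $\xi\neq0$, i.e. $M$ is positive definite.

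The only step that is not pure linear algebra (together with the linearity and well-posedness of the forward problem already established in Section 2) is the unique continuation step. In the model setting of Section \ref{subsec:3.1} the background $q_0$ is a positive constant, so solutions of $\Delta u+k^2 q_0 u=0$ are real-analytic in $\Omega$ and vanishing on the open set $P_m$ immediately yields vanishing on the connected set $\Omega$; thus this step is entirely elementary here, and for a general $L^\infty$ background one would instead invoke the standard unique continuation results for Schr\"odinger-type operators.
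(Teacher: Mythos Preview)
Your proof is correct and follows essentially the same route as the paper: expand the quadratic form as $k^2\int_{P_m}|u_{q_0}^{(g_\xi)}|^2\,\mathrm{d}x$ to get positive semidefiniteness, then use unique continuation (the paper phrases this as analyticity of solutions to the Helmholtz equation, citing \cite[Theorem~2.2]{Colton2019}) on the open pixel $P_m$ to force $u_{q_0}^{(g_\xi)}\equiv 0$ in $\Omega$, hence $g_\xi=0$ and $\xi=0$ by linear independence. Your additional remark distinguishing the constant-$q_0$ case (analyticity) from a general $L^\infty$ background (strong unique continuation) is a nice clarification that the paper does not spell out.
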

\begin{proof}
For all $v=(v_1, v_2, ... , v_n)^{\top}\in \mathbb{R}^n$,  using formula \eqref{eq:Fn-derivative} and Theorem \ref{eq:Frechet}, one can deduce that
\begin{equation*}
  v^{\top} (\mathcal{F}'_n (q_0)\chi_{P_m}) v=\sum_{i, j=1}^n  v_i v_j \langle g_i,  \Lambda'(q_0) \chi_{P_m}g_j \rangle =\langle g,  \Lambda'(q_0) \chi_{P_m}g \rangle
= \int_{P_m} k^2 |u_{q_0}^{(g)}|^2\, \mathrm{d}x\geq 0,
\end{equation*}
where $g=\sum_{i=1}^n v_i g_i$.
The above formula shows that $ \mathcal{F}'_n (q_0)\chi_{P_m} \subset \mathbb{R}^{n\times n}$ is a positive semi-definite symmetric matrix.

To prove definiteness, we argue by contradiction, and assume that there exists $0\neq \tilde v\in \mathbb{R}^n$ with $(\mathcal{F}'_n (q_0)\chi_{P_m}) \tilde v=0$.
Then, by linear independence, and as above,
\begin{equation*}
\tilde g=\sum_{i=1}^n \tilde v_i g_i\neq 0,\quad \text{ and } \quad \int_{P_m} k^2 |u_{q_0}^{(\tilde g)}|^2\, \mathrm{d}x= 0.
\end{equation*}
From the last equation, one can find that  $u_{q_0}^{(\tilde{g})}\equiv 0$ in $P_m$. Note that  $P_m \subset \Omega$  is an open set,  the analyticity property \cite[Theorem 2.2]{Colton2019} of  the Helmholtz equation also  implies that $u_{q_0}^{(\tilde{g})}\equiv 0$  in $\Omega$ and hence $\tilde g=0$ on $\partial \Omega$. This contradiction implies that $\mathcal{F}'_n (q_0)\chi_{P_m}$  is a positive definite matrix.
\end{proof}

We now present the convergence result for the minimization problem \eqref{eq:minimizer}.
\begin{theorem}\label{thm:main}
Consider the minimization problem
\begin{equation}\label{problem}
  \min_{h\in \mathcal{A}} \left( \sum_{\{j:\, \lambda_j>0\}} \lambda_j(\bm R(h)) \right),
 \end{equation}
where $\bm R (h)$ is a $n\times n$ symmetric  matrix depending on the parameter $h\in \mathcal{A}$.
Then  it holds that:
\begin{description}
  \item[(i)] Problem \eqref{problem} admits a unique minimizer $\hat h$.
  \item[(ii)] For any pixel $P_m$,  it holds that $P_m\subseteq \text{supp}\, \hat h $ if and only if $P_m\subseteq D$.\\
 \quad \quad ~~~~ Moreover, $\hat h=\sum_{m=1}^M \min(q_{\min}-q_0, \beta_m) \chi_{P_m}$.
\end{description}

\end{theorem}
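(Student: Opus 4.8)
The plan is to exploit two structural features of the objective: its convexity and its monotonicity in the parameter $h$. For convexity I would use the variational identity, valid for every symmetric $A\in\mathbb{S}_n$,
\[
\sum_{\{j:\,\lambda_j>0\}}\lambda_j(A)=\max\bigl\{\operatorname{tr}(PA):\ 0\preceq P\preceq I\bigr\},
\]
which displays the objective as a supremum of affine functionals of $A$. Since $h\mapsto\bm R(h)=\mathcal F_n(q)-\mathcal F_n(q_0)-\mathcal F_n'(q_0)h$ is affine, the composition $h\mapsto\sum_{\lambda_j>0}\lambda_j(\bm R(h))$ is convex and continuous in the coefficient vector $(a_1,\dots,a_M)$. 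The admissible set $\mathcal A$ is identified with the box $\prod_{m=1}^M[0,\min(q_{\min}-q_0,\beta_m)]$, each factor being a bounded interval, so $\mathcal A$ is compact; a continuous function on a compact set attains its infimum, which gives existence of a minimizer.

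To locate it, set $h^{\ast}:=\sum_{m=1}^M\min(q_{\min}-q_0,\beta_m)\chi_{P_m}$, the pointwise-largest element of $\mathcal A$. The key point is that the objective is non-increasing with respect to the pointwise order on $h$: if $h_1\le h_2$, then by Lemma~\ref{eq:Frechet} the difference $\bm R(h_1)-\bm R(h_2)=\mathcal F_n'(q_0)(h_2-h_1)$ is positive semidefinite, because $\mathcal F_n'(q_0)$ sends nonnegative functions to positive semidefinite matrices (its quadratic form is $\int_\Omega k^2(\cdot)\,|u_{q_0}^{(g)}|^2\,\mathrm dx\ge0$); hence $\lambda_j(\bm R(h_1))\ge\lambda_j(\bm R(h_2))$ for every $j$, so $\sum_{\lambda_j>0}\lambda_j(\bm R(h_1))\ge\sum_{\lambda_j>0}\lambda_j(\bm R(h_2))$. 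Thus $h^{\ast}$ is a minimizer. Granting uniqueness, this also yields (ii): since the $P_m$ are disjoint open pixels, $\operatorname{supp}\hat h=\operatorname{supp}h^{\ast}=\overline{\bigcup\{P_m:\ \min(q_{\min}-q_0,\beta_m)>0\}}$, and because $q_{\min}>q_0$ together with Lemma~\ref{lemma1} we have $\min(q_{\min}-q_0,\beta_m)>0\iff\beta_m>0\iff P_m\subseteq D$; the explicit formula for $\hat h$ is then immediate.

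For uniqueness, which I expect to be the main obstacle since the objective is convex but not strictly convex, I would argue by contradiction. Suppose $\hat h\in\mathcal A$ is a minimizer with $\hat h\ne h^{\ast}$. As $\hat h\le h^{\ast}$, there are a pixel $P_{m_0}$ and $\delta>0$ with $h^{\ast}-\hat h\ge\delta\chi_{P_{m_0}}$, and then $\min(q_{\min}-q_0,\beta_{m_0})>0$, so $P_{m_0}\subseteq D$ by Lemma~\ref{lemma1}. By Lemma~\ref{lemma2}, $\mathcal F_n'(q_0)\chi_{P_{m_0}}$ is positive definite, whence
\[
\bm R(\hat h)-\bm R(h^{\ast})=\mathcal F_n'(q_0)(h^{\ast}-\hat h)\succeq\delta\,\mathcal F_n'(q_0)\chi_{P_{m_0}}\succ 0 .
\]
Next I would produce a nonzero dual certificate $P^{\ast}$ with $0\preceq P^{\ast}\preceq I$ and $\operatorname{tr}(P^{\ast}\bm R(h^{\ast}))=\sum_{\lambda_j>0}\lambda_j(\bm R(h^{\ast}))$. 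To this end I check that $\bm R(h^{\ast})$ has at least one nonnegative eigenvalue: writing
\[
\bm R(h^{\ast})=\bigl[\mathcal F_n(q)-\mathcal F_n(q_0)-\mathcal F_n'(q_0)(q-q_0)\bigr]+\mathcal F_n'(q_0)\bigl((q-q_0)-h^{\ast}\bigr),
\]
the bracket has at most $d(\widetilde q)$ negative eigenvalues by Theorem~\ref{monotonicity-relation} (note $h^{\ast}\le q-q_0$ and $d(q)\le d(\widetilde q)$), and adding the positive semidefinite second term does not increase this count by Weyl's inequality; provided $n>d(\widetilde q)$ --- a mild non-degeneracy condition on the number of measurements --- the matrix $\bm R(h^{\ast})$ then has a nonnegative eigenvalue, and one may take $P^{\ast}$ to be the spectral projection onto its positive eigenspace, or onto a zero eigenvector if there is no positive one. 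Then
\[
\sum_{\lambda_j>0}\lambda_j(\bm R(\hat h))\ge\operatorname{tr}(P^{\ast}\bm R(\hat h))=\sum_{\lambda_j>0}\lambda_j(\bm R(h^{\ast}))+\operatorname{tr}\bigl(P^{\ast}(\bm R(\hat h)-\bm R(h^{\ast}))\bigr),
\]
and $\operatorname{tr}(P^{\ast}(\bm R(\hat h)-\bm R(h^{\ast})))>0$ because $P^{\ast}\succeq0$, $P^{\ast}\ne0$ and $\bm R(\hat h)-\bm R(h^{\ast})\succ0$; this contradicts the minimality of $\hat h$, forcing $\hat h=h^{\ast}$ and completing both (i) and (ii). The delicate ingredient is precisely the construction of the nonzero dual certificate: strict positivity of $\bm R(\hat h)-\bm R(h^{\ast})$ alone is not enough, and one must rule out the degenerate case in which $\bm R(h^{\ast})$ is negative definite, which is exactly where the monotonicity relation and the measurement count enter.
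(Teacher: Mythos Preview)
Your argument is correct and uses the same key ingredients as the paper: compactness of $\mathcal A$ for existence, the monotonicity relation (Theorem~\ref{monotonicity-relation}) to bound the number of negative eigenvalues of $\bm R(h)$ on $\mathcal A$, Lemma~\ref{lemma1} for the support characterization, and Lemma~\ref{lemma2} together with Weyl's inequality to force the unique minimizer to be the pointwise-largest admissible element $h^{\ast}$.

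The only real difference is in how the strict decrease of the objective is formalized. The paper first observes that for every $h\in\mathcal A$ the matrix $\bm R(h)$ has at most $d(\widetilde q)$ negative eigenvalues, so that on $\mathcal A$ one may write
\[
\Phi(h)=\max_{s=0,\ldots,d(\widetilde q)}\sum_{j=1}^{n-s}\lambda_j(\bm R(h));
\]
since subtracting a positive-definite matrix strictly decreases every eigenvalue (Weyl), each of these finitely many partial sums strictly decreases when one increases $\hat h$ on a pixel, and hence so does their maximum. You instead use the SDP dual $\Phi(h)=\max_{0\preceq P\preceq I}\operatorname{tr}(P\bm R(h))$ and produce a nonzero optimal $P^{\ast}$ at $h^{\ast}$; the eigenvalue bound guarantees (for $n>d(\widetilde q)$) a nonnegative eigenvalue of $\bm R(h^{\ast})$ and hence $P^{\ast}\neq 0$, which is precisely the ``delicate ingredient'' you flag. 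Both routes need $n>d(\widetilde q)$, implicitly in the paper and explicitly in yours. The paper's partial-sum device is marginally shorter once the identity is in hand; your dual-certificate version ties in more directly with the SDP reformulation used later in the numerics. A minor remark: in your uniqueness paragraph the observation ``$P_{m_0}\subseteq D$'' is not actually needed---Lemma~\ref{lemma2} gives positive definiteness of $\mathcal F_n'(q_0)\chi_{P_{m_0}}$ for every pixel.
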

\begin{proof}
(i) Without loss of generality,  we assume that $\lambda_1 (  \bm R(h) )\geq \lambda_2(  \bm R(h) )\geq \ldots \geq \lambda_n(  \bm R(h) )$ denote the $n$ eigenvalues of the symmetric $n\times n$ matrix $\bm R (h)$ in descending order.
We first show that $h\in \mathcal A$ implies $\lambda_j(\bm R(h))\geq 0$ for all $j=1,\ldots,n-d(\widetilde{q})$. By Theorem~\ref{monotonicity-relation} we have that
\begin{align*}
\Lambda(q)-\Lambda(q_0)-\Lambda'(q_0)h\geq_{d(\tilde q)} \Lambda'(q_0)(q-q_0-h)\geq 0.
\end{align*}
This shows that $\Lambda(q)-\Lambda(q_0)-\Lambda'(q_0)h$ can have at most $d(q)\leq d(\tilde{q})$ negative eigenvalues.  Thus, this also holds for its Galerkin projection
$\bm R(h)=\mathcal{F}_{n}(q)-\mathcal{F}_{n}(q_0)- \mathcal{F}_{n}'(q_0) h$.

Due to  $\lambda_j(\bm R(h))\geq 0$ for all $j=1,\ldots,n-d(\widetilde{q})$,  the sum of all positive eigenvalues of $\bm R(h)$  can be equivalently expressed as
\begin{equation}\label{eq:Max}
 \Phi(h):= \sum_{\{j:\, \lambda_j>0\}} \lambda_j(\bm R(h)) = \max_{s=0,\ldots,d(\tilde q)}  \sum_{j=1}^{n-s} \lambda_j(\bm R(h)).
\end{equation}

Since the matrix-valued function  $\bm R(h)\in \mathbb{S}_n$  depends continuously on $h$, each eigenvalue $\lambda_j(\bm R(h))$ is also a continuous function of $h$. As $\bm R(h)$ is a symmetric matrix with eigenvalues ordered in descending order,  the sum of  its $(n-s)$ largest eigenvalues (partial sum)  also varies continuously with $h$.  Note that the function $ \Phi(h)$ takes the maximum over a finite set of such partial sums, the mapping $h \mapsto \Phi(h)$ is continuous. Therefore, $\Phi(h)$ attains a minimum on the compact set $\mathcal{A}$.  The uniqueness of $\hat h$ will follow from the proof of (ii).

(ii) We first show that if $\hat h=\sum_{m=1}^M  \hat a_m \chi_{P_m} (x)\in \mathcal A$
is a minimizer of problem \eqref{problem},
then $\hat h=\sum_{m=1}^M  \min(q_{\min}-q_0, \beta_m) \chi_{P_m} (x)$.  According the definition of $\hat h $, it is obvious that  $\hat h\leq \sum_{m=1}^M  \min(q_{\min}-q_0, \beta_m) \chi_{P_m} (x)$.  In what follows, we show that there is a contradiction for  $\hat h< \sum_{m=1}^M  \min(q_{\min}-q_0, \beta_m) \chi_{P_m} (x)$.

If there exists a pixel $P_m$ such that  $\hat h(x)<\min(q_{\min}-q_0, \beta_m)$, we choose $\tau>0$ such that $\hat h+\tau \chi_{P_m}= \min(q_{\min}-q_0, \beta_m)$. Furthermore, we will show that
\begin{equation}\label{eq:contrad}
  \Phi(\hat h+\tau \chi_{P_m} ) <  \Phi(\hat h),
\end{equation}
which contradicts the minimality of $\hat h$.
Let $\lambda_1(\hat h) \geq  \lambda_2(\hat h) \geq \cdots \geq \lambda_n(\hat h)$ be $n$ eigenvalues of $\bm R (\hat h)$ and
$\lambda_1(\hat h+\tau \chi_{P_m} ) \geq  \lambda_2(\hat h+\tau \chi_{P_m}) \geq   \cdots  \geq \lambda_n(\hat h+\tau \chi_{P_m})$ be $n$ eigenvalues of $\bm R(\hat h+\tau \chi_{P_m})$.
Since
\begin{equation*}
\bm{R}(\hat h+ \tau \chi_{P_m})=\bm{R}(\hat h) -\tau \mathcal{F}'_n (q_0)\chi_{P_m},
\end{equation*}
together with the fact that $\mathcal{F}'_n (q_0)\chi_{P_m}$ is a positive definite matrix in Lemma \ref{lemma2},  we can apply Weyl's inequalities to get that
\begin{equation*}
  \lambda_i(\hat h+\tau \chi_{P_m})<  \lambda_i(\hat h), \ \   \text{for all}\  i\in\{ 1, 2, ... , n \}.
\end{equation*}
Hence,
\begin{equation*}
\begin{aligned}
 \Phi(\hat h+\tau \chi_{P_m} ) -  \Phi(\hat h)=
\max_{s=0,\ldots,d(\tilde q)}  \sum_{j=1}^{n-s} \lambda_j(\hat h +\tau \chi_{P_m})
- \max_{s=0,\ldots,d(\tilde q)}  \sum_{j=1}^{n-s} \lambda_j(\hat h)
<0,
   \end{aligned}
\end{equation*}
which  implies \eqref{eq:contrad} and then contradicts the minimality of $\hat h$.   Thus, we obtain that  $\hat h=\sum_{m=1}^M  \min(q_{\min}-q_0, \beta_m) \chi_{P_m} (x)$.

Note that, $P_m\subseteq \text{supp}\, \hat h $ if and only if $\beta_m>0$ and,
by Lemma \ref{lemma1}, this holds if and only if $P_m\subseteq D$. Moreover, Lemma \ref{lemma1} implies that
\begin{equation*}
  \hat h=
  \begin{cases}
    q_{\min}-q_0,    &\text{for}\ P_m\subseteq D, \\
    0,   &\text{for}\  P_m\not\subseteq D.
  \end{cases}
\end{equation*}
\end{proof}

\subsection{Stability analysis}
Now, we consider the stability of the proposed monotonicity-based regularization method.  Let the measured noisy data $\Lambda^{\delta}(q)$ satisfy
\begin{equation*}
  \|\Lambda^{\delta}(q)-\Lambda(q)\|_{\mathcal{L}(L^2(\partial \Omega))}\leq \delta,
\end{equation*}
where $\delta>0$ represents the noise level.   For simplicity, we define
 \begin{equation*}
  V:=\Lambda(q)-\Lambda(q_0) \quad \text{and} \quad V^{\delta}:=\Lambda^{\delta}(q)-\Lambda(q_0).
\end{equation*}
To ensure that $V^{\delta}$ is  a self-adjoint operator, one can redefine $V^{\delta}$ as
  \begin{equation*}
  V^{\delta}:=\frac{V^{\delta}+(V^{\delta})^*}{2}.
 \end{equation*}
Furthermore, we replace the monotonicity test \eqref{eq:beta} with
\begin{equation}\label{beta-noise}
   \beta_m^{\delta}:=\max \{\alpha\geq 0: V^{\delta} - \alpha \Lambda'(q_0) \chi_{P_m} \geq_{d(\widetilde{q})} -\delta  I \},
\end{equation}
where $I$ is the identity operator.

Before discussing stability, we present the following key lemma.

\begin{lemma}\label{lem:noise}
Assume that $\|\Lambda^{\delta}(q)-\Lambda(q)\|_{\mathcal{L}(L^2(\partial \Omega))} \leq \delta$, then  for every pixel $P_m$, it holds that
$\beta_m\leq \beta_m^{\delta}$ for all $\delta>0$.
\end{lemma}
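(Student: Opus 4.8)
\emph{Proof proposal.} The plan is to show that the admissible set of shift parameters defining $\beta_m$ is contained in the admissible set defining $\beta_m^\delta$, so that $\beta_m\le\beta_m^\delta$ follows at once by taking suprema. Fix any $\alpha\ge 0$ with $V-\alpha\Lambda'(q_0)\chi_{P_m}\ge_{d(\widetilde q)}0$ (such $\alpha$ exist, e.g. $\alpha=0$, since $V=\Lambda(q)-\Lambda(q_0)\ge_{d(q)}0\ge_{d(\widetilde q)}0$ by Theorem~\ref{monotonicity-relation} together with $d(q)\le d(\widetilde q)$). I claim that then $V^\delta-\alpha\Lambda'(q_0)\chi_{P_m}\ge_{d(\widetilde q)}-\delta I$, which makes $\alpha$ admissible in the definition \eqref{beta-noise} of $\beta_m^\delta$.

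The first step is to control the symmetrised perturbation. Since $\Lambda(q)$ and $\Lambda(q_0)$ are self-adjoint on $L^2(\partial\Omega)$, writing $S:=\Lambda^\delta(q)-\Lambda(q)$ one checks that $V^\delta-V=\tfrac12(S+S^*)$ after the symmetrisation $V^\delta\mapsto\tfrac12(V^\delta+(V^\delta)^*)$; hence $\|V^\delta-V\|_{\mathcal{L}(L^2(\partial\Omega))}\le\|S\|_{\mathcal{L}(L^2(\partial\Omega))}\le\delta$. Because $V^\delta-V$ is self-adjoint, this yields the quadratic-form bound $\langle g,(V^\delta-V)g\rangle\ge-\delta\|g\|_{L^2(\partial\Omega)}^2$ for every $g\in L^2(\partial\Omega)$.

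Next I would unpack the order relation $\ge_{d(\widetilde q)}$: there is a subspace $W\subset L^2(\partial\Omega)$ with $\dim W\le d(\widetilde q)$ such that $\langle g,(V-\alpha\Lambda'(q_0)\chi_{P_m})g\rangle\ge 0$ for all $g\in W^\perp$. Adding the two inequalities on $W^\perp$ gives
\[
\langle g,(V^\delta-\alpha\Lambda'(q_0)\chi_{P_m})g\rangle\ge-\delta\|g\|_{L^2(\partial\Omega)}^2=\langle g,(-\delta I)g\rangle,\qquad g\in W^\perp,
\]
which is exactly $V^\delta-\alpha\Lambda'(q_0)\chi_{P_m}\ge_{d(\widetilde q)}-\delta I$, with the \emph{same} subspace $W$. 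Thus $\alpha\le\beta_m^\delta$, and taking the supremum over all admissible $\alpha$ yields $\beta_m\le\beta_m^\delta$.

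This argument is short and essentially computational. The only points that need attention are (i) that the $\delta$-bound survives the symmetrisation of $V^\delta$, which relies on the self-adjointness of the exact operators $\Lambda(q)$, $\Lambda(q_0)$, and (ii) that "$\ge_{d(\widetilde q)}$" is a statement about one fixed subspace $W$ of dimension at most $d(\widetilde q)$ on whose orthogonal complement the form is nonnegative, so the uniform lower bound $-\delta\|g\|^2$ can be added on $W^\perp$ without enlarging $W$ and hence without increasing the permitted number of negative eigenvalues. I do not expect a genuine obstacle beyond this bookkeeping.
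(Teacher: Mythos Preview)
Your proposal is correct and follows essentially the same route as the paper: bound the self-adjoint perturbation $V^\delta-V$ by $\delta$ in operator norm to obtain $V^\delta-V\ge -\delta I$ in quadratic form, then add this to the defining inequality of $\beta_m$ on the same finite-codimensional subspace to obtain the defining inequality of $\beta_m^\delta$. Your treatment is in fact slightly more careful than the paper's, since you explicitly check that the $\delta$-bound survives the symmetrisation of $V^\delta$ and that the subspace $W$ does not need to be enlarged.
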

\begin{proof}
Since $V-V^{\delta}$ is linear, bounded and self-adjoint, for any $g\in L^2(\partial \Omega)$ with $\|g\|_{L^2(\partial \Omega)}\neq 0$, we have
\begin{equation*}
\begin{aligned}
  |\langle g,  (V-V^{\delta}) g\rangle|&=\|g\|^2 \left|\left \langle \frac{g}{\|g\|}, (V-V^{\delta})\frac{g}{\|g\|}   \right\rangle\right|\\
                                                               & = \|g\|^2 \|V-V^{\delta}\|_{\mathcal{L}(L^2(\partial \Omega))}\\
                                                               & = \|g\|^2 \|\Lambda(q)-\Lambda^{\delta}(q)\|_{\mathcal{L}(L^2(\partial \Omega))}\\
                                                               &\leq \delta \|g\|^2.
\end{aligned}
\end{equation*}
Thus, $ -\delta  I \leq V^{\delta}-V \leq \delta  I$ in quadratic sense.  Furthermore,  according to  the definition of $\beta_m$  \eqref{eq:beta},   one can find that
\begin{equation}\label{eq:V-delta}
V^{\delta}- \alpha \Lambda'(q_0) \chi_{P_m}\geq  V- \alpha \Lambda'(q_0) \chi_{P_m} +(V^{\delta}-V)  \geq_{d(\widetilde{q})} -\delta I, \quad \text{for all} \ \alpha\in[0, \, \beta_k].
\end{equation}
Hence, according to definition of  $\beta_m^{\delta}$ in \eqref{beta-noise}, one can find that $\beta_m\leq \beta_m^{\delta}$ for all $\delta>0$.
\end{proof}

\begin{remark}
As a consequence of Theorem \ref{thm:main} and  Lemma \ref{lem:noise},  it holds that
\begin{description}
  \item[(1)]  If $P_m$ lies inside $D$, then $\beta_m^{\delta}\geq q_{\min}-q_0$.
  \item[(2)]  If  $\beta_m^{\delta}=0$, then  $P_m$ does not lies inside $D$.
\end{description}
\end{remark}

Now we present the following stability result.

\begin{theorem}\label{thm:stability}
Consider the minimization problem
\begin{equation}\label{stability}
  \min_{h\in \mathcal{A}} \left(  \sum_{\{j:\, \lambda_j>0\}} \lambda_j(\bm R^{\delta}(h)) +\delta \|\bm R^{\delta}(h)\|_F \right),
 \end{equation}
where
$ \bm{R}^{\delta}(h):= \mathcal{F}^{\delta}(q)-\mathcal{F}(q_0)-\mathcal{F}'(q_0)h$, $\|\cdot \|_F$ denotes the Frobenius  norm,
and the admissible set corresponding to noisy data  is defined by
  \begin{equation*}
  \mathcal{A}^{\delta}:=\left\{h\in L^{\infty}(\Omega): \ h=\sum_{m=1}^M a_m \chi_{P_m} (x),  \ 0\leq a_m\leq \min(q_{\min}-q_0, \beta_m^{\delta})   \right\}.
\end{equation*}
Then it holds that
\begin{description}
  \item[(i)] Problem \eqref{stability} admits a minimizer;
  \item[(ii)] Let $\hat h=\sum_{m=1}^M \min(q_{\min}-q_0, \beta_m) \chi_{P_m}$  and $\hat h^{\delta}=\sum_{m=1}^M \hat a_m^{\delta} \chi_{P_m}$ be the minimizers of problem \eqref{problem} and \eqref{stability}, respectively. Then $\hat h^{\delta}$ converges pointwise and uniformly to $\hat h$ as $\delta\to 0$.
\end{description}
\end{theorem}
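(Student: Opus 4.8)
The plan is to establish (i) by the continuity--compactness reasoning used for Theorem~\ref{thm:main}(i), and (ii) by a compactness/subsequence argument whose only nontrivial ingredient is the stability of the monotonicity threshold $\beta_m^{\delta}$ as $\delta\to0$. For (i): the map $A\mapsto\sum_{\{j:\,\lambda_j>0\}}\lambda_j(A)$ is continuous on $\mathbb S_n$ (each eigenvalue $\lambda_j(A)$ depends continuously on $A$ and $t\mapsto\max\{t,0\}$ is continuous), the Frobenius norm is continuous, and $h\mapsto\bm R^{\delta}(h)$ is affine, so the objective in \eqref{stability} is a continuous function of $h$; in the coefficient variables $(a_1,\dots,a_M)$ the noisy admissible set $\mathcal A^{\delta}$ is the box $\prod_{m=1}^M[0,\min(q_{\min}-q_0,\beta_m^{\delta})]$, a nonempty compact subset of $\mathbb R^M$, so the objective attains its infimum there, which proves (i).

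For (ii) the core claim is that $\min(q_{\min}-q_0,\beta_m^{\delta})\to\min(q_{\min}-q_0,\beta_m)$ as $\delta\to0$ for every pixel $P_m$. If $P_m\subseteq D$, Lemma~\ref{lem:noise} and Lemma~\ref{lemma1} give $\beta_m^{\delta}\geq\beta_m\geq q_{\min}-q_0$, so both caps equal $q_{\min}-q_0$ for all $\delta>0$. If $P_m\not\subseteq D$, then $\beta_m=0$ by Lemma~\ref{lemma1}, and I claim $\beta_m^{\delta}\to0$. Suppose not: then $\beta_m^{\delta_k}\geq c>0$ along some $\delta_k\to0$, and since $\Lambda'(q_0)\chi_{P_m}\geq0$ the test set defining $\beta_m^{\delta_k}$ is the interval $[0,\beta_m^{\delta_k}]$, so $V^{\delta_k}-c\,\Lambda'(q_0)\chi_{P_m}\geq_{d(\widetilde q)}-\delta_k I$. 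As $V^{\delta_k}\to V$ in $\mathcal{L}(L^2(\partial \Omega))$ and $\delta_k\to0$, and since ``having at most $d(\widetilde q)$ negative eigenvalues'' is closed under operator-norm limits (were the limiting operator negative definite on some $(d(\widetilde q)+1)$-dimensional subspace, so would be the approximants for large $k$, contradicting $\geq_{d(\widetilde q)}$), we obtain $V-c\,\Lambda'(q_0)\chi_{P_m}\geq_{d(\widetilde q)}0$, i.e.\ $\beta_m\geq c>0$, a contradiction. Hence for $P_m\not\subseteq D$ we get $0\le\hat a_m^{\delta}\le\min(q_{\min}-q_0,\beta_m^{\delta})\le\beta_m^{\delta}\to0$.

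Given this, the proof of (ii) closes as follows. Since $0\le\hat a_m^{\delta}\le q_{\min}-q_0$ for every $m$, any sequence $\delta_k\to0$ admits a subsequence (not relabeled) with $\hat a_m^{\delta_k}\to a_m^{*}$ for all $m$; put $h^{*}:=\sum_m a_m^{*}\chi_{P_m}$, so by the previous paragraph $a_m^{*}=0$ for $P_m\not\subseteq D$ and $a_m^{*}\in[0,q_{\min}-q_0]$ for $P_m\subseteq D$, whence $h^{*}\in\mathcal A$. Moreover $\hat h\in\mathcal A^{\delta}$ for every $\delta>0$ (for $P_m\subseteq D$, $\hat a_m=q_{\min}-q_0\le\beta_m^{\delta}$ by Lemma~\ref{lem:noise}; otherwise $\hat a_m=0$), so $\hat h$ is a competitor in \eqref{stability}, and minimality of $\hat h^{\delta_k}$ together with $\delta_k\|\bm R^{\delta_k}(\hat h^{\delta_k})\|_F\geq0$ gives
\[
\sum_{\{j:\,\lambda_j>0\}}\lambda_j(\bm R^{\delta_k}(\hat h^{\delta_k}))\ \leq\ \sum_{\{j:\,\lambda_j>0\}}\lambda_j(\bm R^{\delta_k}(\hat h))+\delta_k\|\bm R^{\delta_k}(\hat h)\|_F.
\]
Since each entry $\langle g_i,(\Lambda^{\delta}(q)-\Lambda(q))g_j\rangle$ is bounded by $\delta$, we have $\mathcal F^{\delta}(q)\to\mathcal F(q)$, hence $\bm R^{\delta_k}(\hat h)\to\bm R(\hat h)$ and $\bm R^{\delta_k}(\hat h^{\delta_k})=\mathcal F^{\delta_k}(q)-\mathcal F(q_0)-\mathcal F'(q_0)\hat h^{\delta_k}\to\mathcal F(q)-\mathcal F(q_0)-\mathcal F'(q_0)h^{*}=\bm R(h^{*})$; letting $k\to\infty$ and using continuity of $A\mapsto\sum_{\{j:\,\lambda_j>0\}}\lambda_j(A)$ and boundedness of $\|\bm R^{\delta_k}(\hat h)\|_F$ yields $\sum_{\{j:\,\lambda_j>0\}}\lambda_j(\bm R(h^{*}))\le\sum_{\{j:\,\lambda_j>0\}}\lambda_j(\bm R(\hat h))$, so $h^{*}$ minimizes \eqref{problem}. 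By uniqueness in Theorem~\ref{thm:main}, $h^{*}=\hat h$; as every sequence $\delta_k\to0$ has a subsequence with limit $\hat h$, the whole family converges, $\hat a_m^{\delta}\to\hat a_m$ for each $m$, and since $\hat h^{\delta}$ and $\hat h$ are piecewise constant on the same finite partition, $\|\hat h^{\delta}-\hat h\|_{L^{\infty}(\Omega)}=\max_m|\hat a_m^{\delta}-\hat a_m|\to0$, which is the pointwise and uniform convergence asserted in (ii).

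I expect the threshold-stability step---$\beta_m^{\delta}\to0$ for pixels outside $D$---to be the main obstacle, since it is exactly where the finite-codimension character of the Helmholtz monotonicity relation (Theorem~\ref{monotonicity-relation}) intervenes: one has to check that ``at most $d(\widetilde q)$ negative eigenvalues'' survives operator-norm perturbations and then reuse the localized-potentials input already packaged in Lemma~\ref{lemma1}. The rest---finite-dimensional compactness, the minimality comparison against $\hat h$, and closing over subsequences---is routine, and the Frobenius penalty $\delta\|\bm R^{\delta}(h)\|_F$ plays no role in the limit beyond being nonnegative and of order $\delta$ at $\hat h$.
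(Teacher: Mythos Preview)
Your proposal is correct and takes essentially the same approach as the paper: continuity and compactness for (i); for (ii), extraction of a convergent subsequence of the coefficient vector, verification that the limit lies in $\mathcal{A}$ by passing the relation $\geq_{d(\widetilde q)}$ to the operator-norm limit (the paper does this more tersely by concluding $a_m\leq\beta_m$ directly rather than via your cap-convergence $\min(q_{\min}-q_0,\beta_m^{\delta})\to\min(q_{\min}-q_0,\beta_m)$, but it is the same computation, and you are in fact more explicit about why ``at most $d(\widetilde q)$ negative eigenvalues'' is closed under limits), the minimality comparison against $\hat h\in\mathcal{A}^{\delta}$ supplied by Lemma~\ref{lem:noise}, and finally uniqueness from Theorem~\ref{thm:main}. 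Your remark that the Frobenius penalty plays no analytic role in the limit is consistent with the paper's proof, where the penalty term likewise vanishes as $\delta\to0$.
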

\begin{proof}
(i)  Let
\begin{equation*}
  \Phi^{\delta}(h):=
 \sum_{\{j:\, \lambda_j>0\}} \lambda_j(\bm R^{\delta}(h)) +\delta \|\bm R^{\delta}(h)\|_F.
\end{equation*}
Then, with the same arguments as in the proof of Theorem~\ref{thm:main},
it is obvious that  problem \eqref{stability} admits at least one  minimizer in the compact set $ \mathcal{A}^{\delta}$ since $h\mapsto \Phi^{\delta}(h)$ is continuous.

(ii) Step 1:  convergence of a subsequence of $\hat h^{\delta}$.
For any fixed $m$,  noting  that $0\leq \hat a_m^{\delta}\leq q_{\min}-q_0$, by Weierstrass' theorem, there exists a subsequence $(\hat a_1^{\delta_n}, ...,  \hat a_M^{\delta_n})$ converging to some limit $(a_1, ... , a_M)$, as  $\delta_n\to 0$. It is clear to find that $0\leq a_m\leq q_{\min}-q_0$ for all $m=1, ... ,M$.

Step 2: upper bound and limit.
We shall check that $a_m\leq \beta_m$ for all $m=1, ...  ,M$.  Due to $V^{\delta} \to V$ as $\delta \to 0$, we have
\begin{equation*}
  V-a_m \Lambda'(q_0) \chi_{P_m}=\lim_{\delta_n \to 0}\left (  V^{\delta_n}-\hat a_m^{\delta_n} \Lambda'(q_0) \chi_{P_m} \right)
\end{equation*}
any fixed $m$.  Therefore,  using \eqref{eq:V-delta},  we have
\begin{equation*}
\begin{aligned}
\langle g,  (V-a_m \Lambda'(q_0) \chi_{P_m}) g\rangle
            &=\lim_{\delta_n \to 0} \langle g,   (V^{\delta_n}-\hat a_m^{\delta_n} \Lambda'(q_0) \chi_{P_m} )g \rangle\\
            &\geq_{d(\widetilde{q})}  \lim_{\delta_n \to 0} \langle g,  -\delta_n g \rangle=0,
 \end{aligned}
\end{equation*}
for any $g\in L^2(\partial \Omega)$ with $\|g\|_{L^2(\partial \Omega)}\neq 0$.

Step 3: minimality of the limit.
By Lemma \ref{lem:noise},  one can get that $\min(q_{\min}-q_0, \beta_m)\leq \min (q_{\min}-q_0, \beta_m^{\delta})$ for all $m=1, ... , M $.
Thus, $\hat h$ belongs to the admissible class of the minimization problem \eqref{stability} for all $\delta>0$. By the minimality of $\hat h^{\delta}$,  one can obtain
\begin{equation}\label{eq:R}
\Phi^{\delta}(\hat h^{\delta}) \leq \Phi^{\delta}(\hat h).
\end{equation}
Denote by  $h=\sum_{m=1}^M a_m \chi_{P_m}$, where $a_m$'s are the limits obtained in Step 1.  Let
\begin{equation*}
  \bm{R}^{\delta_n}(\hat h^{\delta_n})=  \left\langle g_i,   \left(V^{\delta_n}- \sum_{m=1}^M \hat a_m^{\delta_n} \Lambda'(q_0) \chi_{P_m}\right)g_j \right\rangle
\end{equation*}
and
\begin{equation*}
 \bm{R}(h)=  \left\langle g_i,   \left(V- \sum_{m=1}^M a_m \Lambda'(q_0) \chi_{P_m}\right)g_j \right\rangle
\end{equation*}
Since $V^{\delta_n}\to V$ and $\hat a_m^{\delta_n} \to a_m$  for any $m\in \{1,...,M\}$ as $\delta_n\to 0$,   it is easy to check that $ \bm{R}^{\delta_n}(\hat h^{\delta_n})\to \bm{R}(h)$ as $\delta_n\to 0$. Thus, we have  $\Phi^{\delta_n}(\hat h^{\delta_n}) \to \Phi(h)$ as $\delta_n\to 0$. Moreover, by a similar argument, one can obtain
$ \Phi^{\delta_n}(\hat h) \to \Phi(\hat h)$  as $\delta_n\to 0$. Thus,  it follows from \eqref{eq:R}
\begin{equation*}
\Phi(h)  \leq \Phi (\hat h).
\end{equation*}
Notice that $\hat h$ is a minimizer of problem \eqref{problem} and $h=\sum_{m=1}^M a_m \chi_{P_m}$ belongs to the admissible class of problem \eqref{problem},  together with the above inequality,  it implies that $h$ is also  a minimizer.  By the uniqueness of the minimizer of problem \eqref{problem} in Theorem \ref{thm:main}, we get $h=\hat h$.

Step 4: convergence of the whole sequence $\hat h^{\delta}$.
We have proved that every subsequence of $(\hat h_1^{\delta}, ... , \hat h_M^{\delta})$ has a convergent subsubsequence, which implies the convergence of the whole sequence $(\hat h_1^{\delta}, ... , \hat h_M^{\delta})$ to $(\min(q_{\min}-q_0, \beta_1),..., \min(q_{\min}-q_0, \beta_M)) $.
\end{proof}

\begin{remark}
In Theorem \ref{thm:stability}, we incorporate the penalty term $\delta \|\bm R^{\delta}(h)\|_F$  into the residual functional. The primary purpose of this addition is to ensure that the minimization problem with noisy data, given in \eqref{stability}, converges to the original noise-free problem \eqref{problem} stated in Theorem \ref{thm:main}.  This formulation is designed to guarantee both theoretical global convergence and robustness to noise in numerical implementations.
\end{remark}

\section{Computational  experiments}

In this section, we present several numerical experiments to demonstrate the effectiveness and efficiency of the monotonicity-based regularization method. Before proceeding, we introduce  some relevant notations and the forward problem calculations.

\subsection{Data generation}
To begin with,  we specify the procedure for generating the artificial data   $\bm V:=\mathcal{F}_n (q)-\mathcal{F}_n(q_0)$ and $\bm S_m:=\mathcal{F}'_n(q_0)\chi_{P_m}$, where $\mathcal{F}_n (q)$ and $\mathcal{F}'_n(q_0)$ are defined in \eqref{eq:Fn} and \eqref{eq:Fn-derivative},  respectively.
Let $\Omega$  be the unit disk centered at the origin in two dimensions.
We consider the Neumann data $g_j$ in the following orthonormal set of $L^2(\partial \Omega)$:
\begin{equation*}
 \left \{1, \ \frac{1}{\sqrt{\pi}}\sin( j \varphi), \  \frac{1}{\sqrt{\pi}}\cos( j \varphi): j=1,2, ..., N_1 \right \}, \quad N=2N_1+1.
\end{equation*}
In the following, we set the number of basis functions to  $N_1=16$, i.e., $N=33$. We note that
\begin{equation*}
  \bm V_{i,j}= \int_{\partial \Omega} g_i \, (\Lambda(q)-\Lambda(q_0)) g_j \,\mathrm{d}s=\int_{\partial \Omega} g_i \, (u_q^j -u_{q_0}^j) \,\mathrm{d}s, \quad i,j=1,2, ... , N,
\end{equation*}
where $u_q^j$ is the solution to the Helmholtz equation \eqref{eq:main} for the refractive index $q$ and the Neumann boundary condition $g_j$.
To reduce computational cost,  we denote by $v^j$ the difference $u_q^j-u_{q_0}^j$,  then $v^j$ satisfies the following system:
\begin{equation}\label{sys:1}
  \begin{aligned}
& \Delta v^j+k^2 q v^j=k^2(q_0-q) u_{q_0}^j , \quad \ \, \text{in}\   \Omega, \medskip\\
&\partial_{\nu}v^j=0,  \qquad \qquad   \qquad \qquad \quad \quad \text{on}\ \partial \Omega,
\end{aligned}
\end{equation}
where $u_{q_0}^j$ admits a unique solution on the unit disk:
\begin{equation*}
  u_{q_0}^j(r, \varphi)=
\begin{cases}
&\displaystyle\frac{J_{m}(kr\sqrt{q_0}  )}{k\sqrt{q_0\pi} J'_{m}(k\sqrt{q_0} )}  \sin( j \varphi), \quad \text{if}\  g_j=\frac{1}{\sqrt{\pi}}\sin( j \varphi), \medskip\\
&\displaystyle \frac{J_{m}(kr\sqrt{q_0} )}{k\sqrt{q_0\pi} J'_{m}(k\sqrt{q_0} )} \cos( j \varphi), \quad \text{if}\  g_j=\frac{1}{\sqrt{\pi}}\cos( j \varphi).
\end{cases}
\end{equation*}
Here the pair $(r, \varphi)$ represents the polar coordinates for the unit disk $\Omega$.  Furthermore,  the  system given in \eqref{sys:1}  is  solved  using the COMSOL (a finite element software) via the coefficient form PDE model. Interested readers could refer to the EIT model presented in \cite{Harrach21JDMV} for more details.

Thus, each element of the matrix $\bm V$ can be represented by
  \begin{equation*}
      \bm V_{i,j}=\int_{\partial \Omega} g_i \, v^j \,\mathrm{d}s, \quad i,j=1,2, ... , N.
  \end{equation*}
  Moreover,  each entry of the matrix $\bm S_m$, i.e.,
\begin{equation*}
  \bm S_m^{i,j}=\int_{\partial \Omega}  g_i (\Lambda' (q)\chi_{P_m}) g_j \, \mathrm{d}s=\int_{P_m} k^2  u_{q_0}^i  u_{q_0}^j \,\mathrm{d}x,  \quad i,j=1,2, ... , N,
\end{equation*}
can be computed on each triangular element $P_m$  by using area coordinates  with the linear Lagrange elements.
%Notably, we introduce a square subdomain within the mesh for the inverse problem. This approach not only reduces computational costs of the inversion by restricting calculations to the interior of the square,  but also ensures uniformly distributed sampling pixels.
To test the stability of the proposed method, we also add some noise to the measurements, i.e.,
\begin{equation*}
  \bm V^{\delta}:=\bm V+ \frac{\bm E}{\|\bm E \|_F}\delta,
\end{equation*}
where $\delta>0$ represents the noise level and $\bm E$  is a random matrix in  $\mathbb{R}^{N\times N}$ with a uniform distribution between  $-1$ and $1$.  To ensure that $\bm V^{\delta}$ is a symmetric matrix, we  redefine it as $\bm V^{\delta}=\left((\bm V^{\delta})^*+\bm V^{\delta}\right)/2$.

Next, we provide the details for determining $d(\widetilde{q})$.
Noting that $d(\widetilde{q})$  is the number of negative Neumann eigenvalues of the operator  $-(\Delta+k^2 \widetilde{q})$,
we introduce the variational form of the equation \eqref{eq:modified-Helmholtz}, given by
\begin{equation*}
  \int_{\Omega} (\nabla u\cdot \nabla v-k^2 \widetilde{q}uv )\, \mathrm{d}x=0, \quad \text{for\,all}\ v\in H^1(\Omega).
\end{equation*}
For simplicity, we set  $\Omega$ to be  the unit disk,  and let $\Omega_0$ be a concentric disk with radius  $r_0<1$.  Moreover, let   $\widetilde{q}$ be the piecewise constant function defined in \eqref{eq:q-tilde}.
We then utilize the linear Lagrange elements to discretize the above variational equation to obtain the algebraic system
\begin{equation*}
(\bm K-k^2 \bm M) \bm u= \bm 0,
\end{equation*}
where $\bm K$  and $\bm M$ denote the stiffness and mass matrices, respectively, and $\bm u$ is the vector of unknown coefficients.
Thus, $d(\widetilde{q})$ is obtained as the count of negative eigenvalues of the discretized matrix  $\bm K-k^2 \bm M$. In Figure \ref{fig:negative-eigenvalues}, we present numerical results for computing  $d(\widetilde{q})$ for  varying  radius $r_0$  with different wavenumber $k$ and refractive index $q_{\max}$. It is clear to see that the number of negative eigenvalues decreases as the radius $r_0$ decreases.  This observation indicates that a more accurate upper bound for $d(q)$ can be obtained when {\it a priori} information about the compact support of $q$ is available.

\begin{figure}
\hfill\subfigure[$k=1,  \, q_{\max}=9 $]{\includegraphics[width=0.48\textwidth]
                   {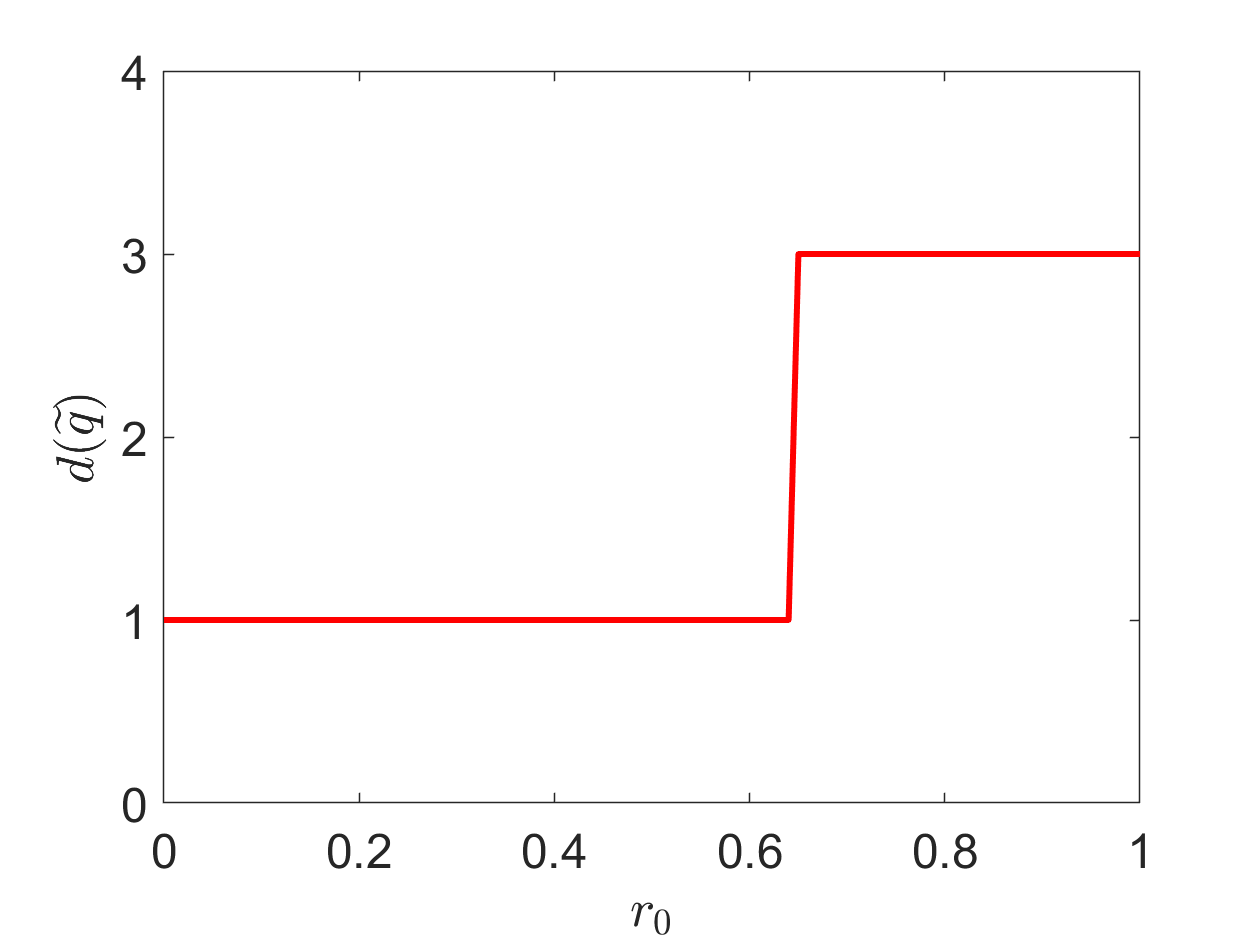}}\hfill
\hfill\subfigure[$k=3, \, q_{\max}=5$]{\includegraphics[width=0.48\textwidth]
                   {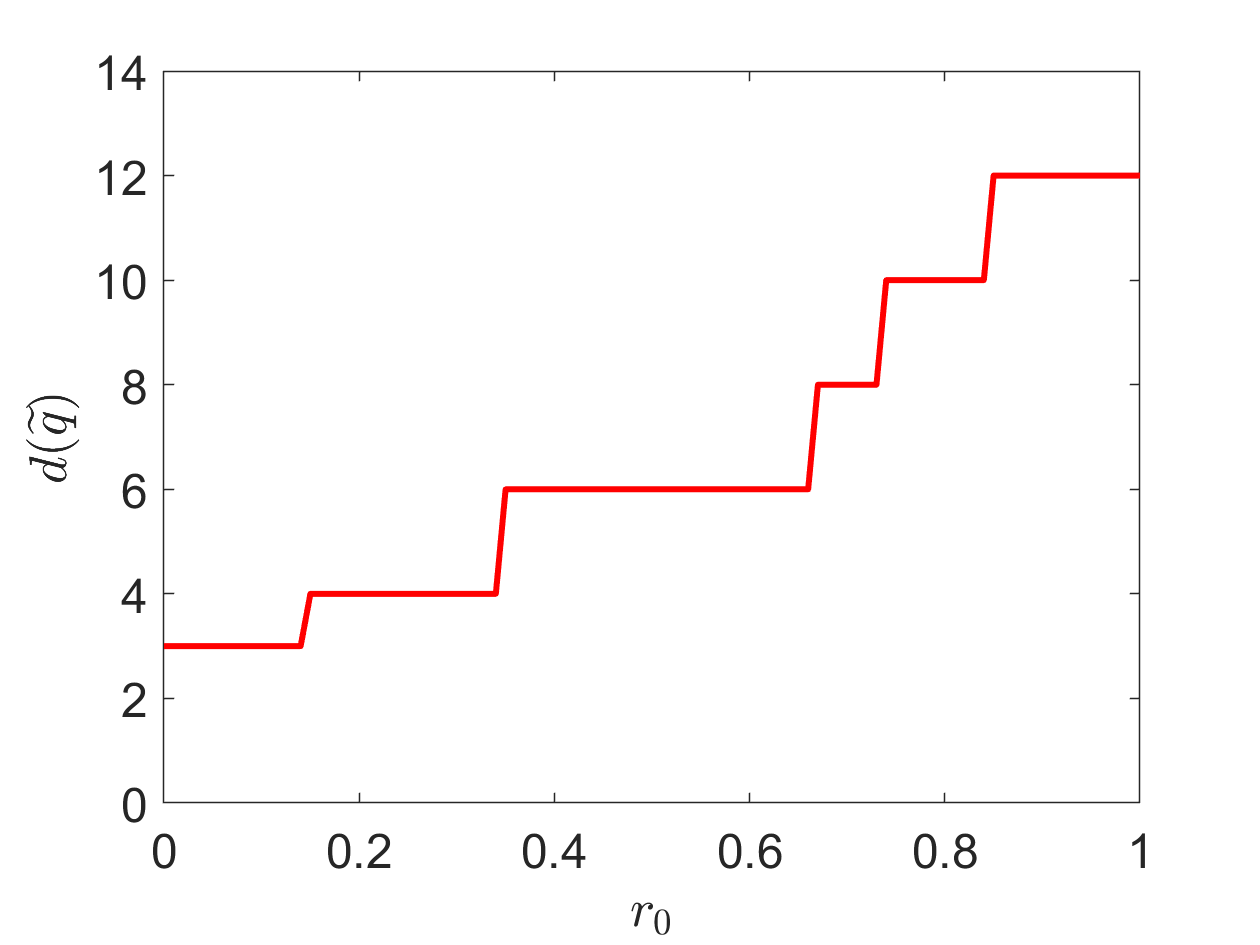}}\hfill
\caption{\label{fig:negative-eigenvalues}  The number of negative eigenvalues $d(\widetilde{q})$  for varying radius $r_0$, with different wavenumber $k$ and refractive index $q_{\max}$.}
\end{figure}

Finally, we introduce a strategy to determine  $\beta_m^{\delta}$ corresponding to the pixel  $P_m$. By replacing the infinite-dimensional operators  $V^{\delta}$ and  $\Lambda'(q_0) \chi_{P_m}$ with  the $n\times n$ matrices $\bm V^{\delta}$ and $\bm S_m$,
the monotonicity test in finite dimensions can be rewritten as:
\begin{equation}\label{eq:beta-m-n}
  \beta_m^{\delta, n}=\max\{ \alpha\geq 0:\   \bm V^{\delta} -\alpha \bm S_m \geq_{d(\widetilde{q})} -\delta \bm I\}.
\end{equation}
As before, let $\lambda_1(A)\geq \dots \geq \lambda_n(A)$ denote the eigenvalues of a symmetric matrix $A\in \mathbb{R}^{n\times n}$ in descending order.
Note that $\lambda_n(\bm S_m)>0$ since $\bm S_m=\mathcal{F}'_n(q_0)\chi_{P_m}$ is a positive-definite matrix by Lemma \ref{lemma2}. Hence, we can define
\begin{equation*}
\alpha_0:=\left\{ \begin{array}{l l} \frac{1-\lambda_n(\bm V^{\delta}+\delta \bm I)}{\lambda_n(\bm S_m)} & \text{ if $\lambda_n(\bm V^{\delta}+\delta \bm I)\leq 0$}\\
0 & \text{ if $\lambda_n(\bm V^{\delta}+\delta \bm I)> 0$}.
\end{array}\right.
\end{equation*}
Then $\bm V^{\delta}+\delta \bm I+\alpha_0 \bm S_m$ is positive definite and thus possesses a Cholesky decomposition
\begin{equation}\label{eq:decomposition}
\bm V^{\delta}+\delta \bm I+\alpha_0 \bm S_m=\bm L \bm L^*,
\end{equation}
where $\bm L$ is an invertible lower triangular matrix with real and positive diagonal entries. Thus, for each
$\alpha\in[0, \beta_m^{\delta, n}]$, it follows that
\begin{equation*}
\bm V^{\delta} -\alpha \bm S_m \geq_{d(\widetilde{q})} -\delta \bm I
\end{equation*}
is equivalent to
\begin{equation*}
  (\alpha+\alpha_0)  \bm L^{-1}\bm S_m  (\bm L^{*})^{-1} \leq_{d(\widetilde{q})}  \bm I,
\end{equation*}
and this is equivalent to
\begin{equation*}
(\alpha+\alpha_0)\lambda_{d(\widetilde{q})+1}(\bm L^{-1}\bm S_m  (\bm L^{*})^{-1})\leq 1.
\end{equation*}
According the definition of $\beta_m^{\delta, n}$ in  \eqref{eq:beta-m-n}, the maximal value of $\alpha$ that satisfies the last formula is given by
\begin{equation*}
  \beta_m^{\delta, n}= \frac{1}{\lambda_{d(\widetilde{q})+1} (\bm L^{-1}\bm S_m (\bm L^{*})^{-1})}-\alpha_0.
\end{equation*}

\subsection{Numerical inversion} Based on the above discussions, the  residual matrix with noise  $\bm R^{\delta}(h)$ can be expressed as
\begin{equation*}
  \bm R^{\delta}(h)=\bm V^{\delta}-\sum_{m=1}^M a_m \bm S_m.
\end{equation*}
In what follows, inspired by \cite{Harrach22OL}, we employ the semidefinite programming (SDP) to solve the  minimization problem in  \eqref{stability}.   Note that the sum of all positive eigenvalues of the matrix $\bm R(h)$ is equal to the optimal value of the following SDP:
\begin{equation*}
   \sum_{\{j:\, \lambda_j>0\}} \lambda_j(\bm R^{\delta}(h)) = \min_{\substack{\bm X \geq 0\\ \bm X - \bm R^{\delta}\geq 0}}  \text{trace}(\bm X).
\end{equation*}
Therefore, the minimization problem
\begin{equation*}
  \min_{h\in \mathcal{A}} \left(  \sum_{\{j:\, \lambda_j>0\}} \lambda_j(\bm R^{\delta}(h)) +\delta \|\bm R^{\delta}(h)\|_F \right),
 \end{equation*}
is equivalent to the following constrained optimization:
\begin{equation}\label{eq:mp}
  \min_{\substack{h\in \mathcal{A}\\\bm X \geq 0\\ \bm X - \bm R^{\delta}\geq 0}} \left(  \text{trace}(\bm X) +\delta \|\bm R^{\delta}(h)\|_F \right).
 \end{equation}
Here  we  use a MATLAB package \textit{CVX}  to solve the above minimization problem.

\begin{remark}
 It is noted  that the sum of $(n-s)$ largest  eigenvalues satisfies:
\begin{equation*}
 \sum_{j=1}^{n-s} \lambda_j(\bm R(h))=\max_{\substack{\bm W\in \mathbb{R}^{n\times (n-s)}\\ \bm W^{\top} \bm W=\bm I_{n-s}}}   \operatorname{trace}( \bm W^{\top}\bm R(h) \bm W).
\end{equation*}
Since the matrix-valued function $\bm R(h)$ depends continuously and linearly on $h$, and $\operatorname{trace}(\bm W^{\top} \bm R(h) \bm W)$ is linear in $h$ for any fixed orthonormal $\bm W$, it follows that the pointwise maximum over such linear functions is convex. Moreover, as shown in \eqref{eq:Max},   the sum of all positive eigenvalues of $\bm R(h)$ can be expressed as the maximum over a finite collection of these convex functions.  Therefore  the function  $h\to \sum_{\{j:\, \lambda_j>0\}} \lambda_j(\bm R^{\delta}(h))$ is convex.  Additionally,  the Frobenius norm $\|\bm R^{\delta}(h)\|_F$  is also convex.  Hence, the full objective function in the equivalent optimization problem   \eqref{eq:mp} is  convex and can be efficiently solved using the convex solver \textit{CVX}.

\end{remark}

  \begin{figure}
\subfigure[Extremely fine mesh for forward problem]{\includegraphics[width=0.45\textwidth]
                   {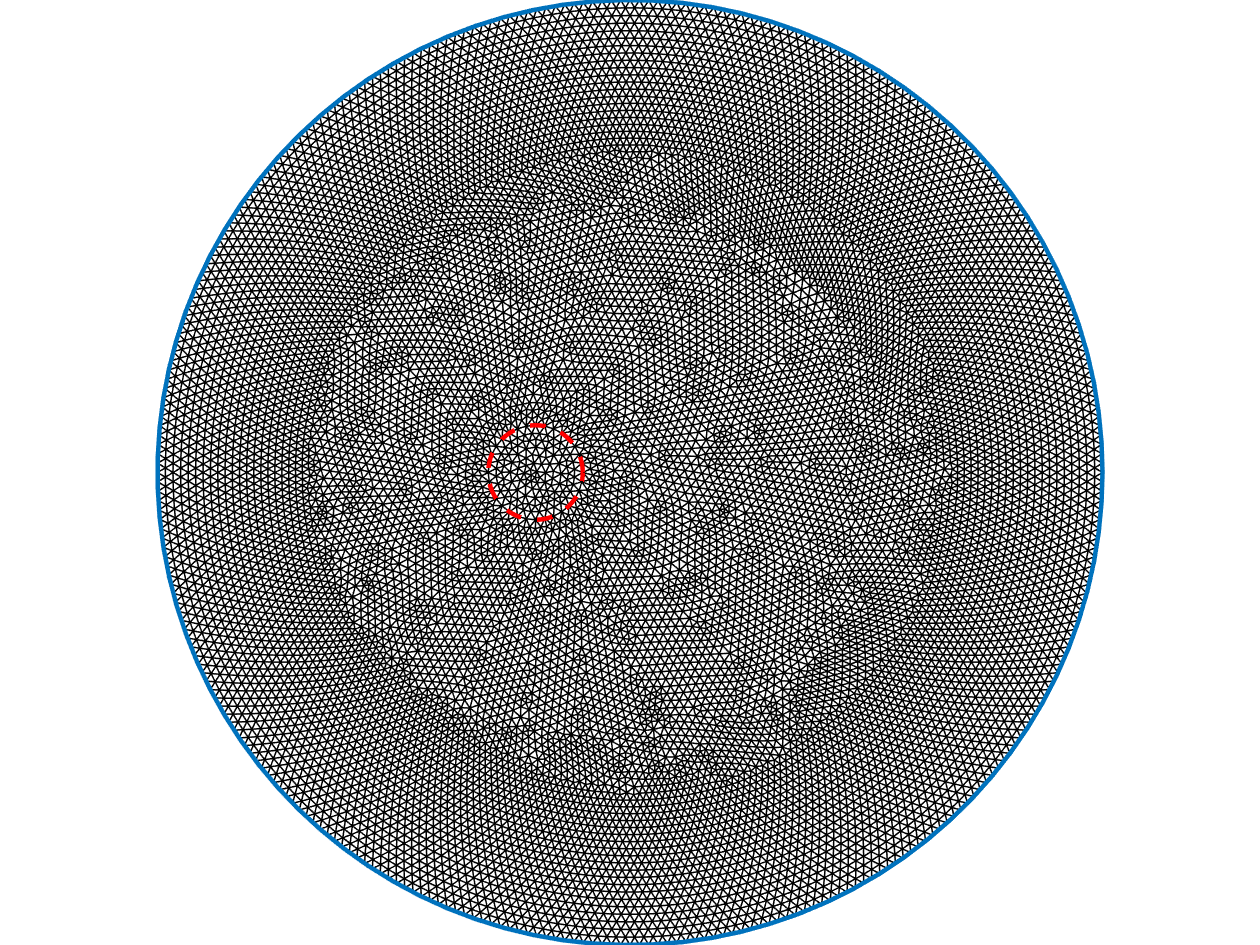}}
\subfigure[Extra fine mesh for  inverse problem]{\includegraphics[width=0.45\textwidth]
                   {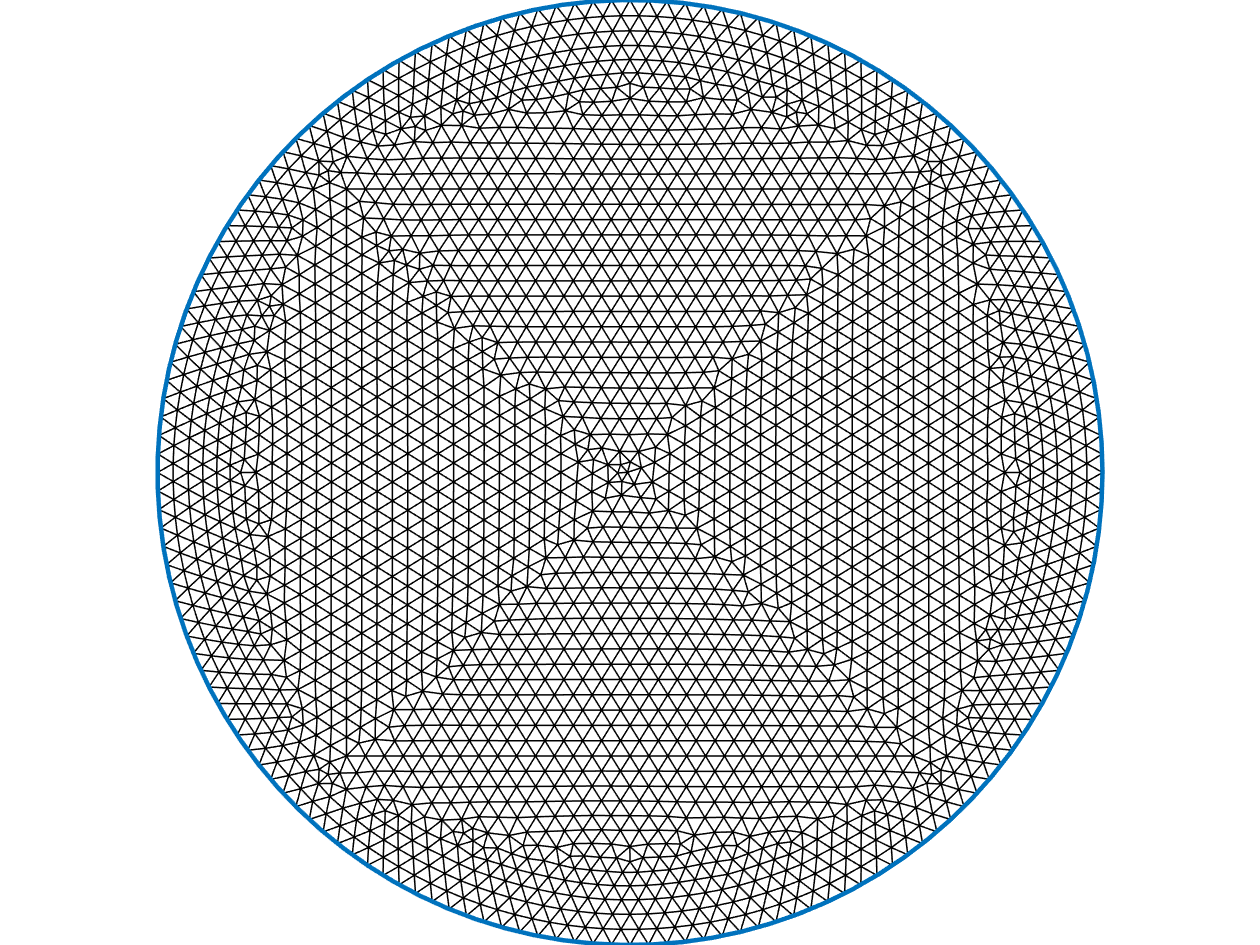}}
\caption{\label{fig:mesh}  Different triangular meshes are used for the forward and inverse problems, respectively.}
\end{figure}

Next we present three examples to verify our theoretical findings. In the numerical experiments, the forward problem is solved using an extremely fine mesh, as illustrated in Figure \ref{fig:mesh}(a). To avoid the so-called inverse crime, the reconstruction domain is independently discretized into 5224 uniformly distributed extra fine triangular elements  $\{P_m\}$, as shown in Figure \ref{fig:mesh}(b). Throughout all experiments, the wavenumber is set to  $k=1$ , and the background refractive index is taken to be  $q_0=1$ in $\Omega\backslash D$.

\begin{figure}
\centering
\subfigure[Exact scatter]{\includegraphics[width=0.44\textwidth]
                   {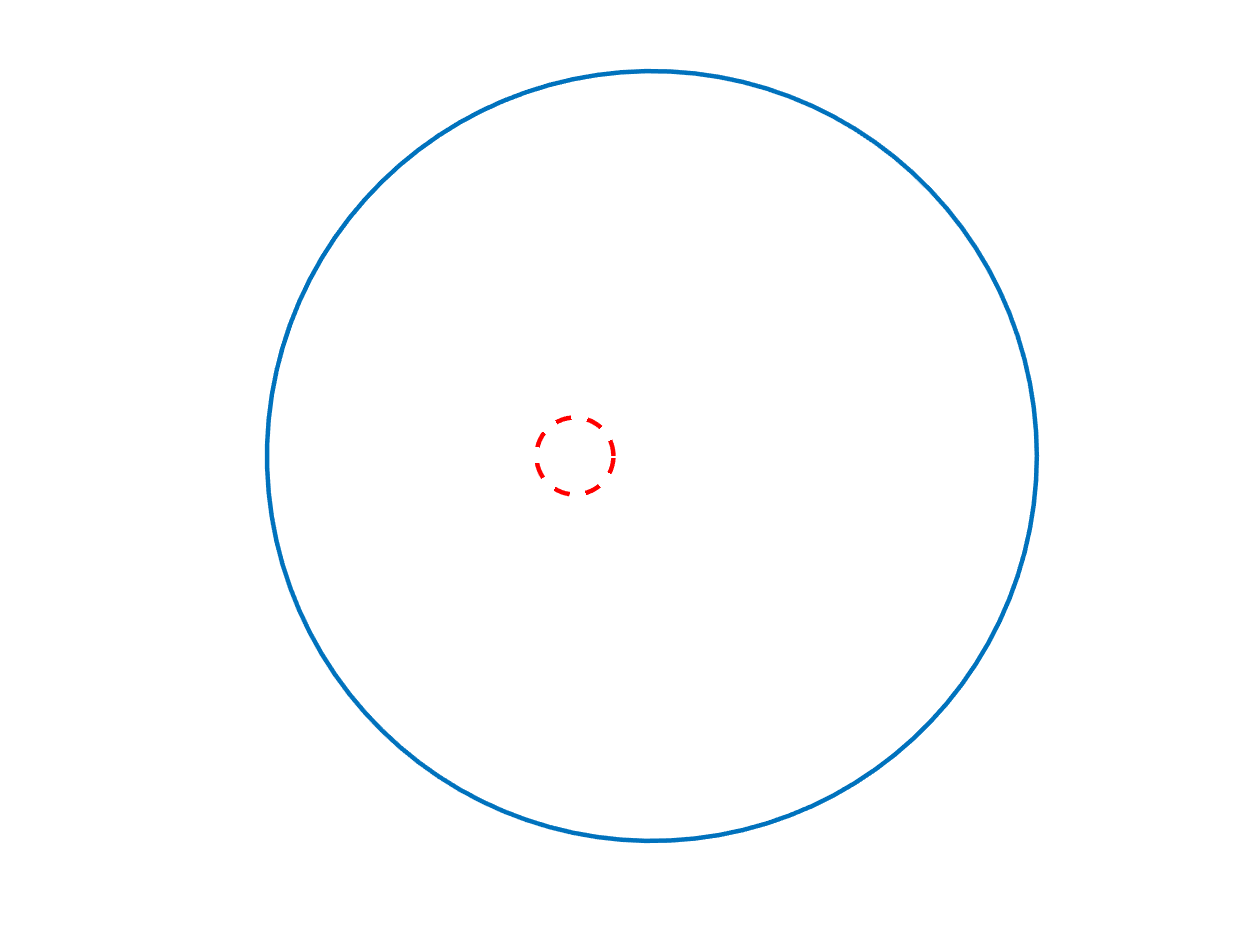}}
\subfigure[$\delta=10^{-6} \%$]{\includegraphics[width=0.45\textwidth]
                   {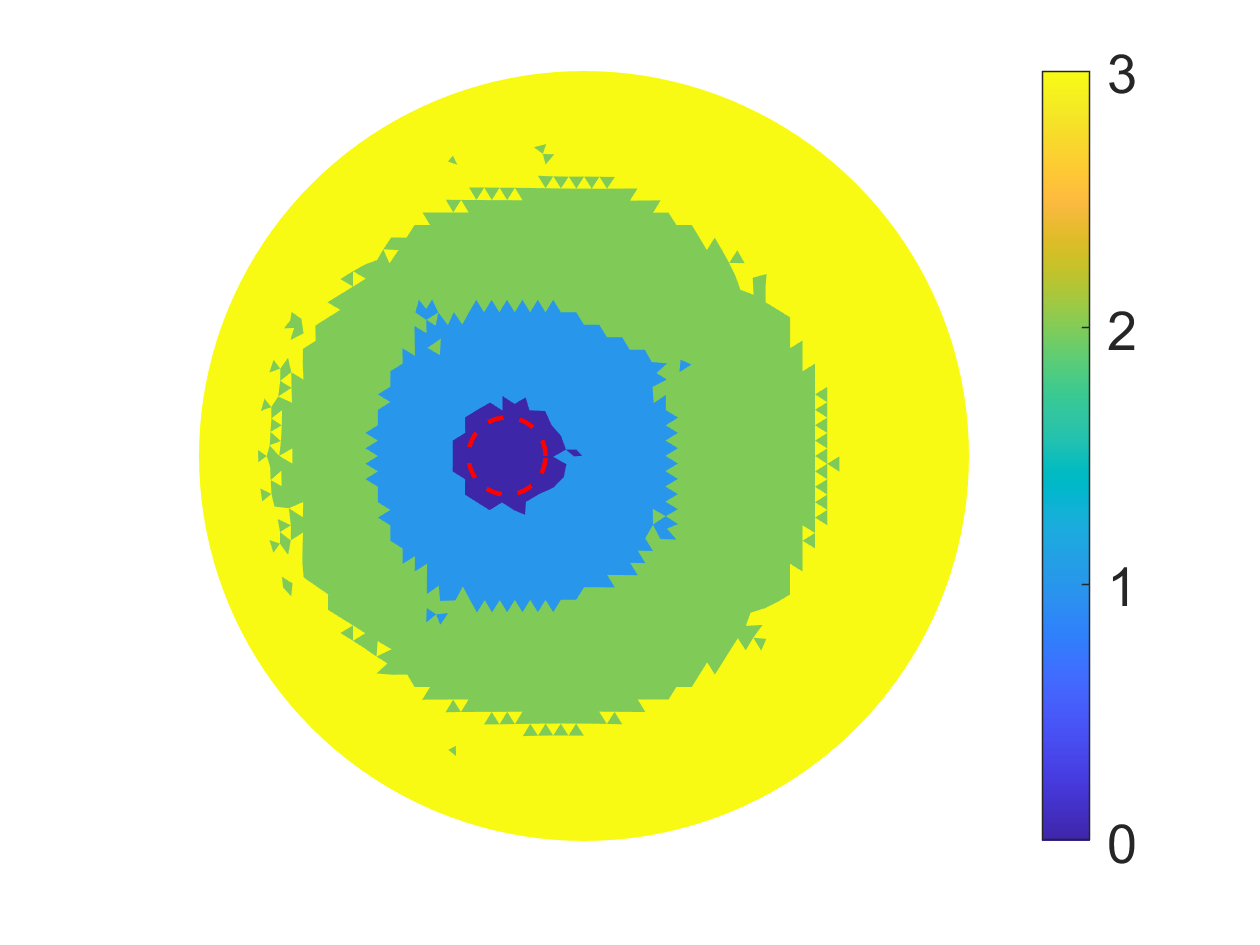}}\\
\subfigure[$\delta=10^{-2} \%$]{\includegraphics[width=0.45\textwidth]
                   {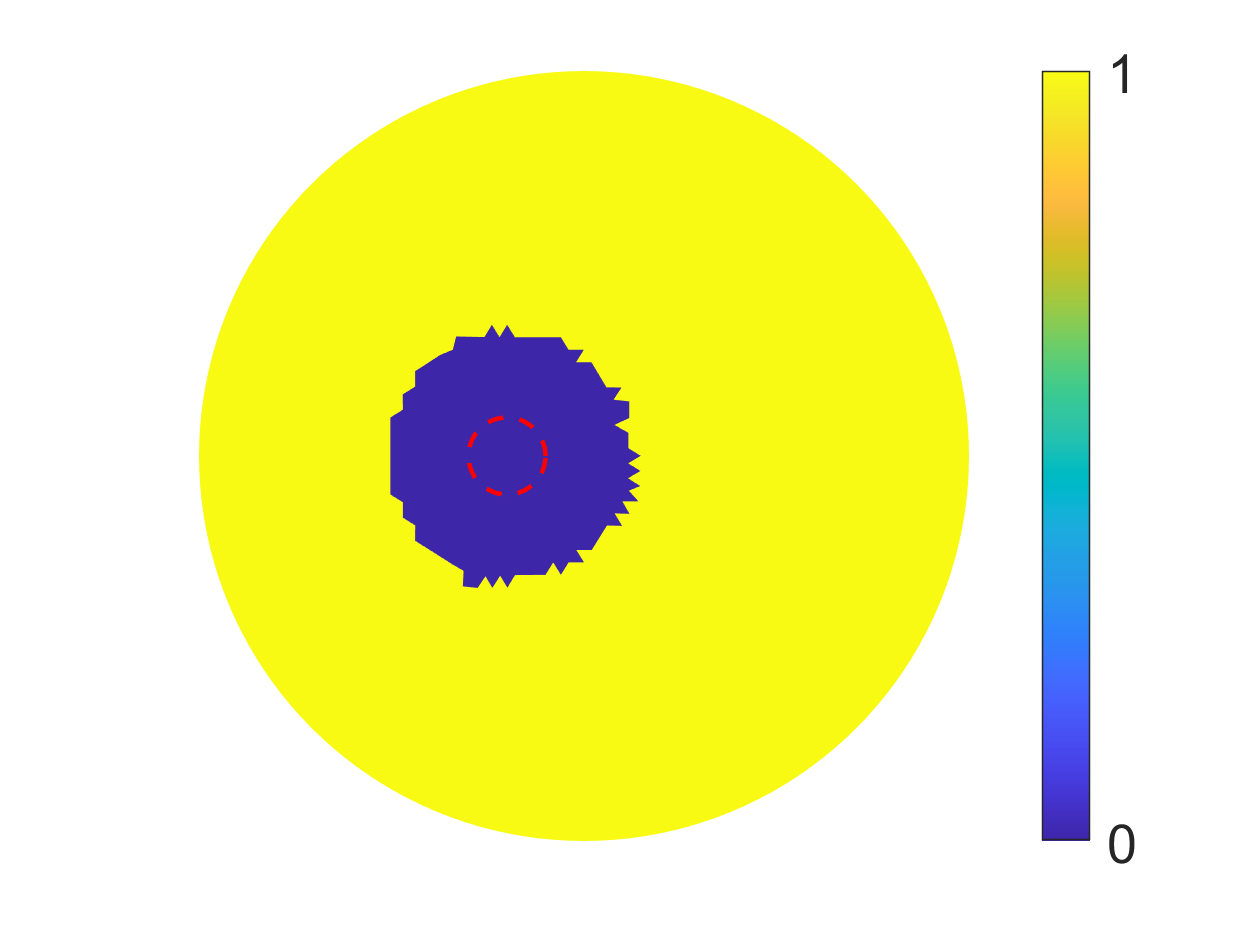}}
 \subfigure[$\delta=10^{-1}  \%$]{\includegraphics[width=0.45\textwidth]
                   {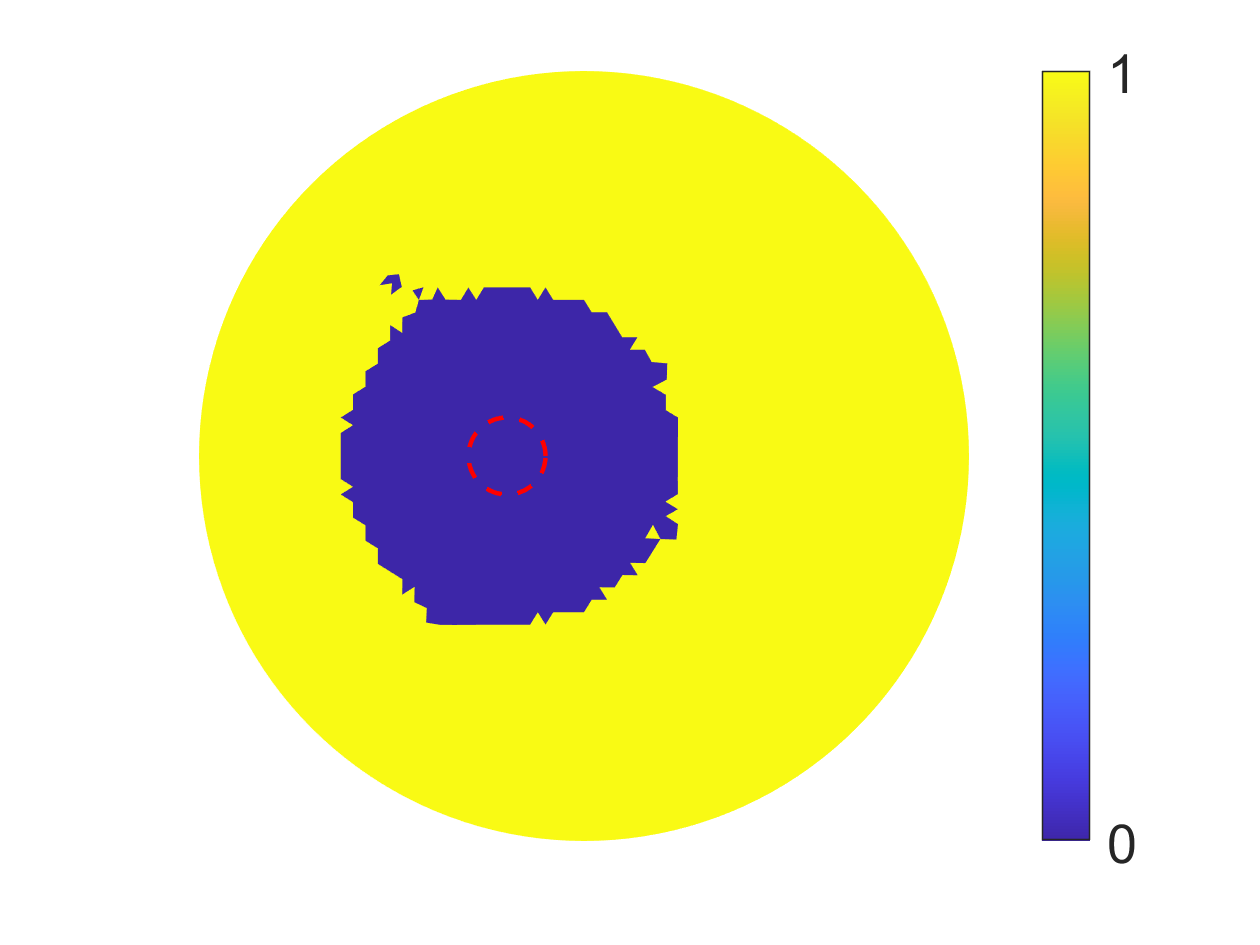}}\\
\caption{\label{fig:single}  Contour plots of the number of the negative eigenvalues for the operator $ \bm V^{\delta} -\alpha \bm S_m +\delta \bm I$  at each pixel $P_m$ under different noise levels $\delta$, where $\alpha=8$. }
\end{figure}

\begin{figure}
\centering
\subfigure[$\delta=1\%$]{\includegraphics[width=0.45\textwidth]
                   {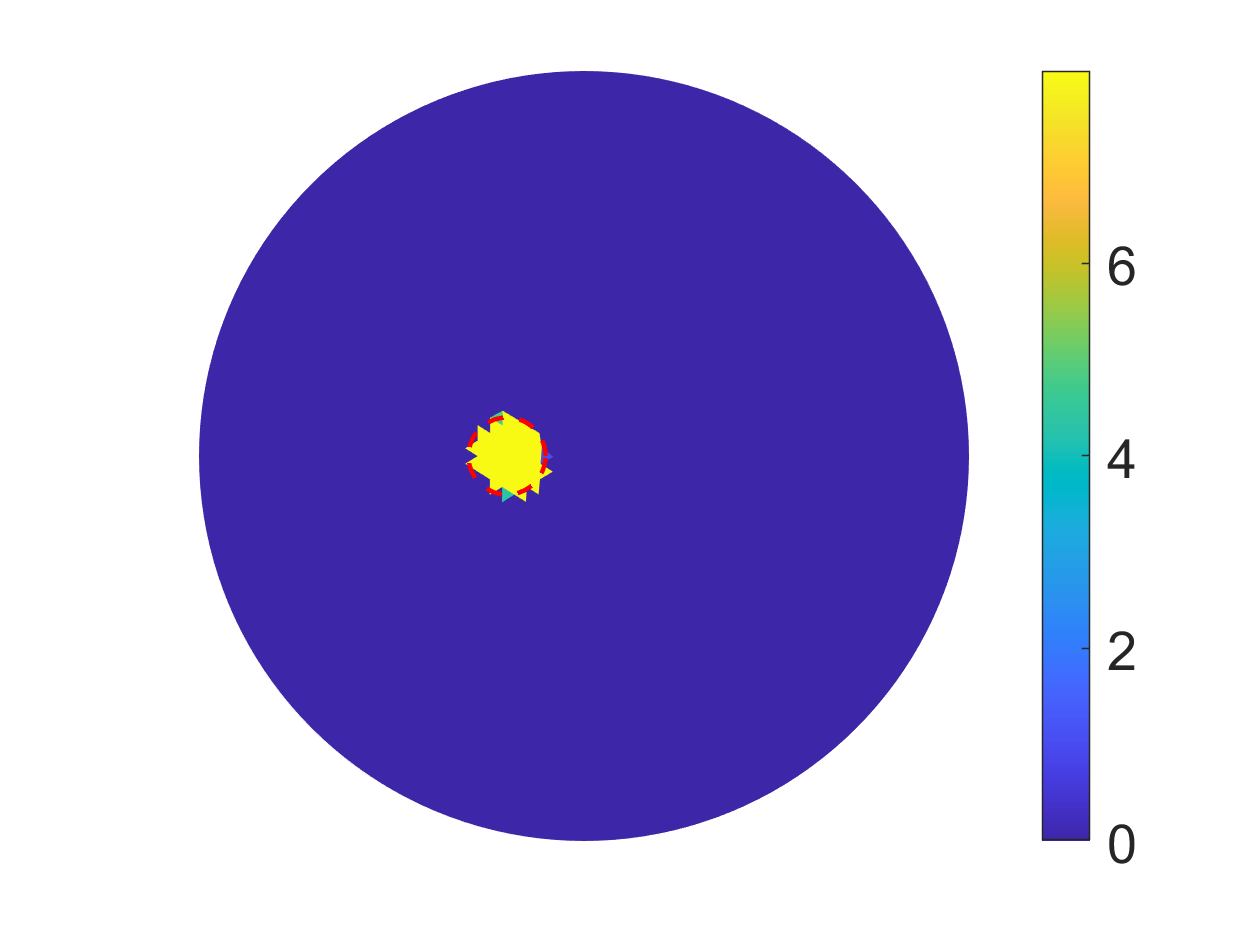}}
\subfigure[$\delta=10\%$]{\includegraphics[width=0.45\textwidth]
                   {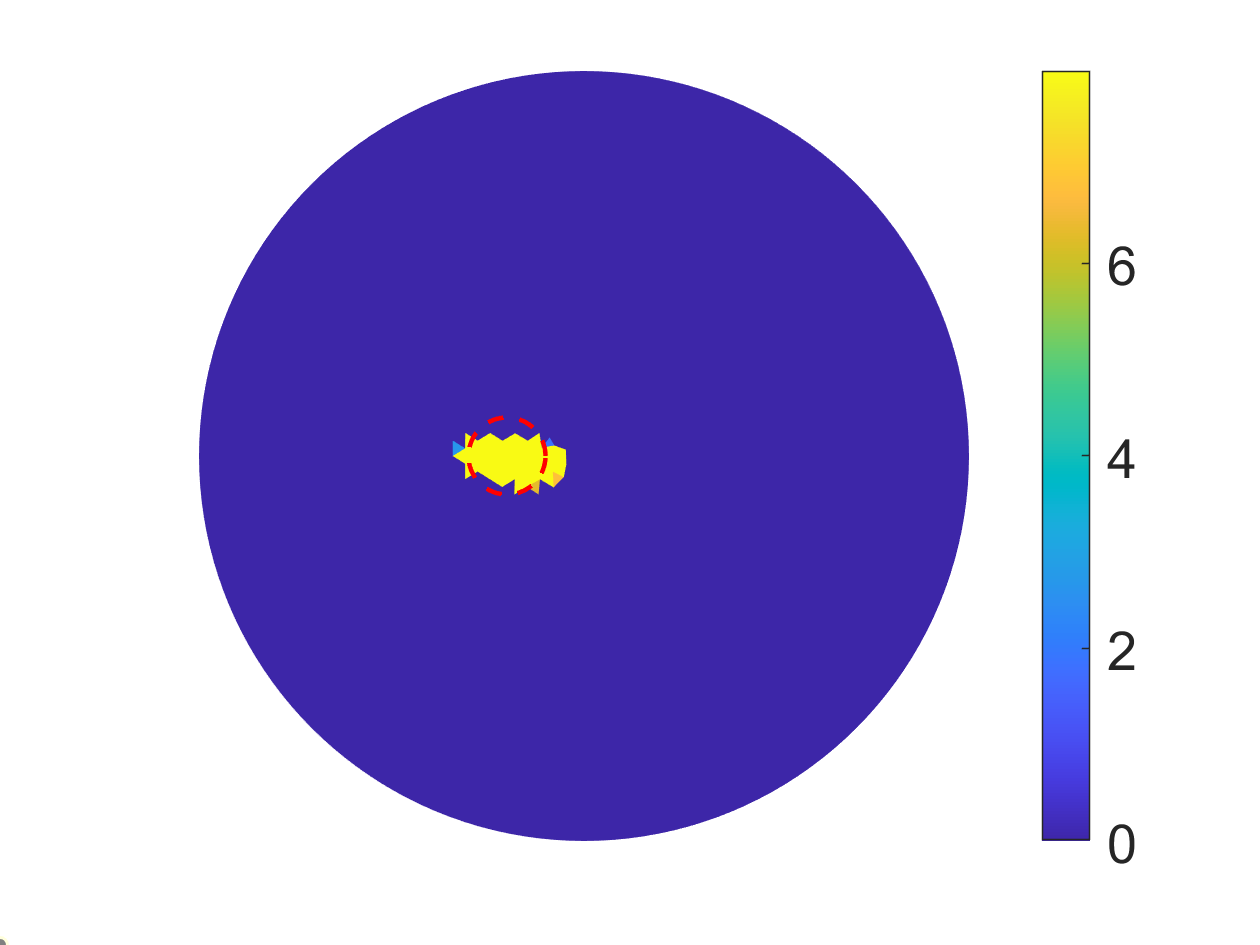}}\\
\caption{\label{fig:single-regularization}  Contour plots  of  the support of minimizer $\hat h^{\delta}$ via the proposed monotonicity-based regularization method with different noise levels $\delta$. }
\end{figure}

\begin{example}
In the first example, we consider a small circular scatterer $D$ centered at $(-0.2, 0)$ with a radius of  $0.1$,  as shown in Figure \ref{fig:single}(a). Here the red dashed curve denotes the shape of the scatterer $D$.  The refractive index within the scatterer is set to $q=9$ in $D$.
Figure \ref{fig:single} presents  the contour plots of  the number of the negative eigenvalues for the operator
$ \bm V^{\delta} -\alpha \bm S_m +\delta \bm I$
evaluated at each pixel $P_m$ under different noise level $\delta$, where the parameter is set to $\alpha=q_{\min-1}=8$. Together with Figure \ref{fig:negative-eigenvalues}(a), these results verify the inequality
\begin{equation*}
\bm V^{\delta} -\alpha \bm S_m \geq_{d(\widetilde{q})} -\delta \bm I
\end{equation*}
 defined in \eqref{eq:beta-m-n}. This  result also implies  that the number of negative eigenvalues inside the scatterer is bounded above by $d(\widetilde{q})=1$.  Furthermore, one can observe that the exact scatterer is located within the compact support of the region corresponding to the number of minimum eigenvalues. However, this region becomes larger as the noise level increases.  Therefore, it is difficult to accurately identify the shape of the scatterer only using  the monotonicity relation under high noise levels. To overcome this challenge, we employ the monotonicity-based regularization method defined in \eqref{eq:mp} to determine the scatterer.  As shown in Figure \ref{fig:single-regularization}, the proposed method exhibits good performance in reconstructing the unknown scatterer under high noise levels, thereby demonstrating its robustness and stability.
\end{example}

\begin{example}
In the second example,  we consider a non-convex,   pear-shaped scatterer $D$ centered at $(0,\, 0)$, see Figure \ref{fig:pear}(a).  The boundary of the scatterer is parameterized by
\begin{equation*}
 (x,\, y)= (0.2+0.03\cos 3t)(\cos t, \ \sin t), \quad t\in[0, \, 2\pi].
\end{equation*}
The refractive index of the scatterer $D$ is set to  $q=5$. Here,  the upper bound of the  number of negative eigenvalues is set to $d(\widetilde{q})=1$.  In order to illustrate the advantages of our approach, we consider the following minimization problem under the Frobenius norm for comparison:
\begin{equation*}\label{eq:dis-mini-fro}
  \min_{0\leq a_m\leq \min(q_{\min}-1, \beta_m^{\delta, n})}  \left\|\bm R^{\delta}(h) \right\|_F.
 \end{equation*}
where  $\beta_m^{\delta, n}$ is defined in  \eqref{eq:beta-m-n}. Figures~\ref{fig:pear}(b) and \ref{fig:pear}(c) show the reconstructions under the Frobenius norm-based method with different noise levels. Moreover, Figures~\ref{fig:pear}(d) and \ref{fig:pear}(e) present the reconstructions based on the sum of  all positive eigenvalues with  the corresponding noise levels.  By comparing these reconstruction results,
it is clear that both methods are robust to noise, even under a 10\% noise level.
However, our method yields more accurate reconstructions than the Frobenius norm-based approach as the noise level $\delta=10^{-9}\%$. This enhancement can be attributed to the global convergence guarantee inherent in our method, whereas the minimization of the residual functional under the Frobenius norm lacks such a guarantee.
\end{example}

\begin{figure}[H]
\centering
\subfigure[ ]{\includegraphics[width=0.45\textwidth]
                   {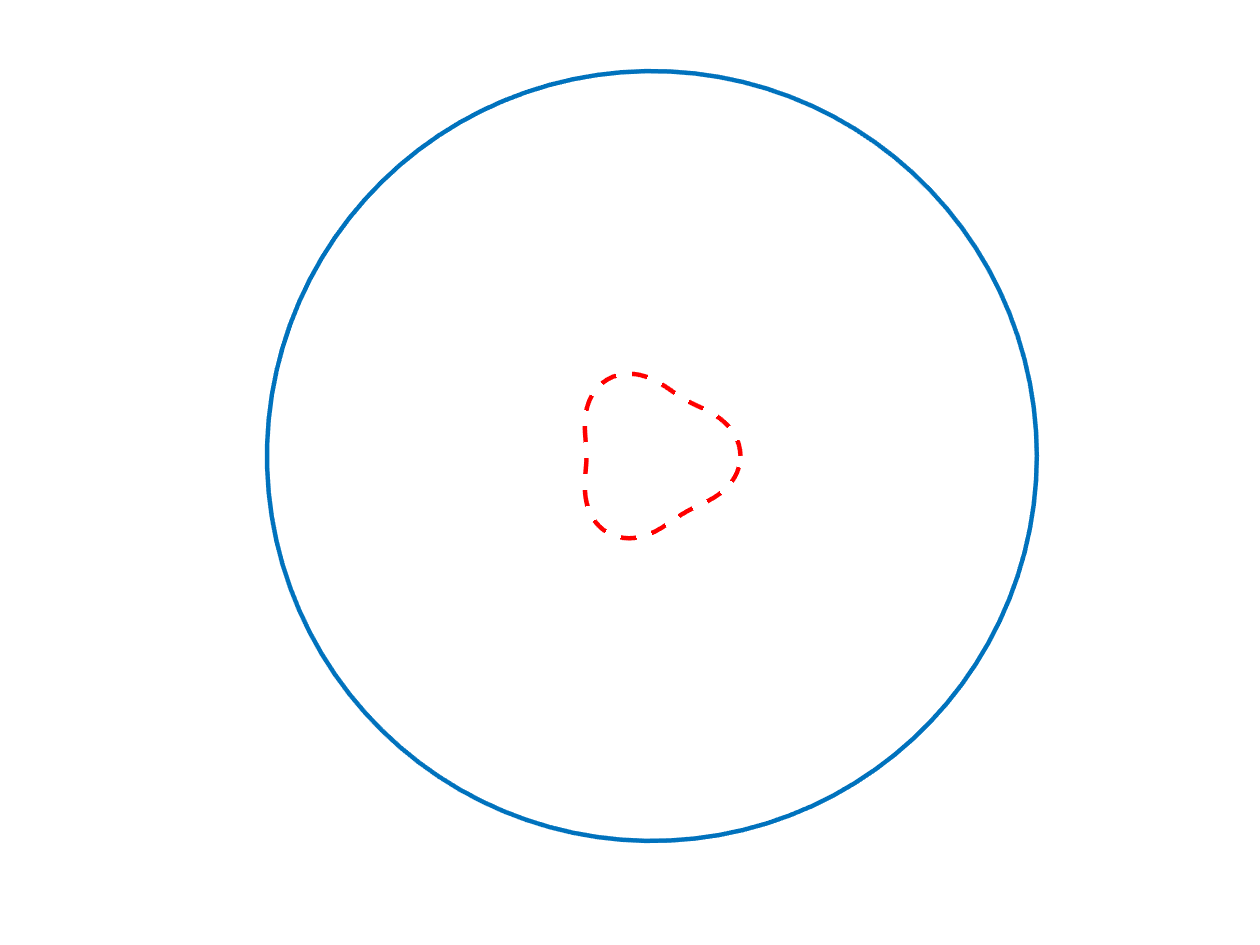}}\\
\subfigure[$\delta=10^{-9}\%$]{\includegraphics[width=0.45\textwidth]
                   {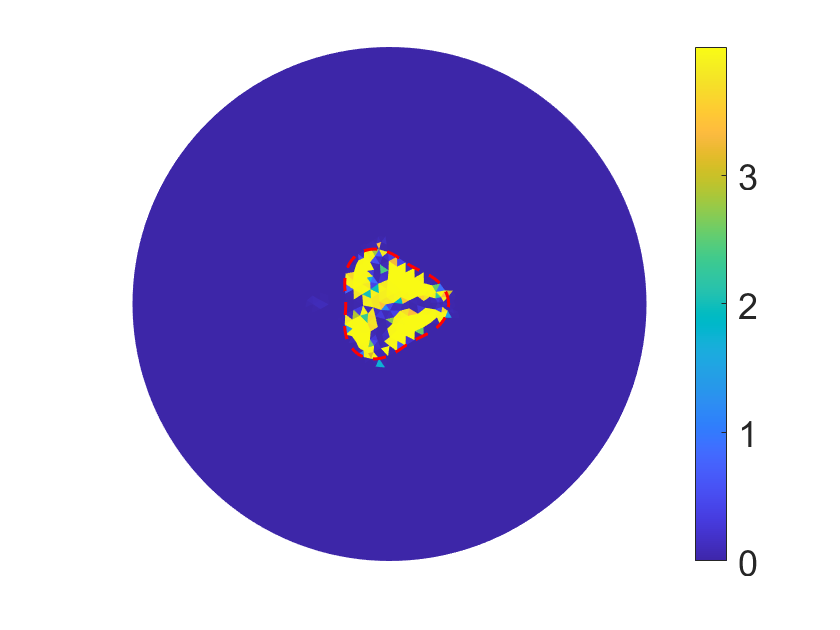}}
\subfigure[$\delta=10\%$]{\includegraphics[width=0.45\textwidth]
                   {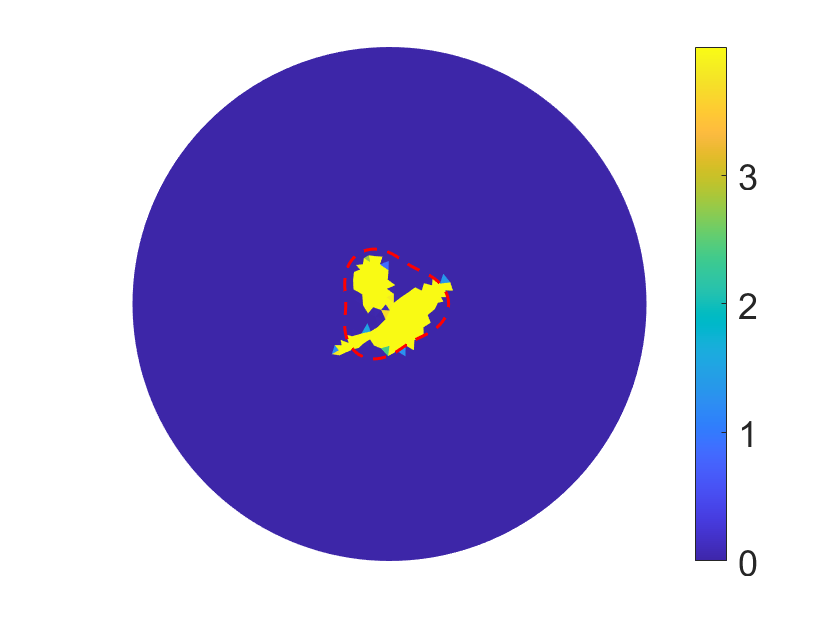}}\\
\subfigure[$\delta=10^{-9}\%$]{\includegraphics[width=0.45\textwidth]
                   {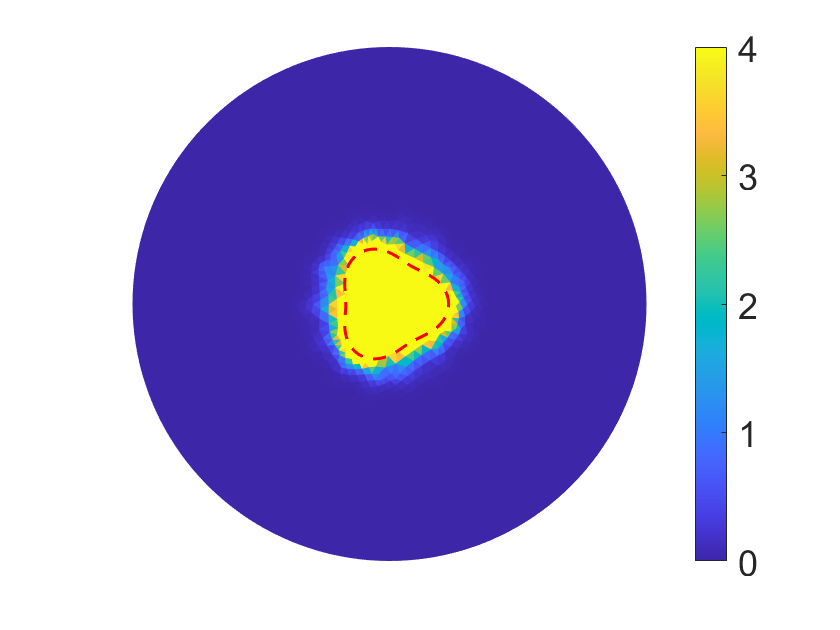}}
\subfigure[$\delta=10\%$]{\includegraphics[width=0.45\textwidth]
                   {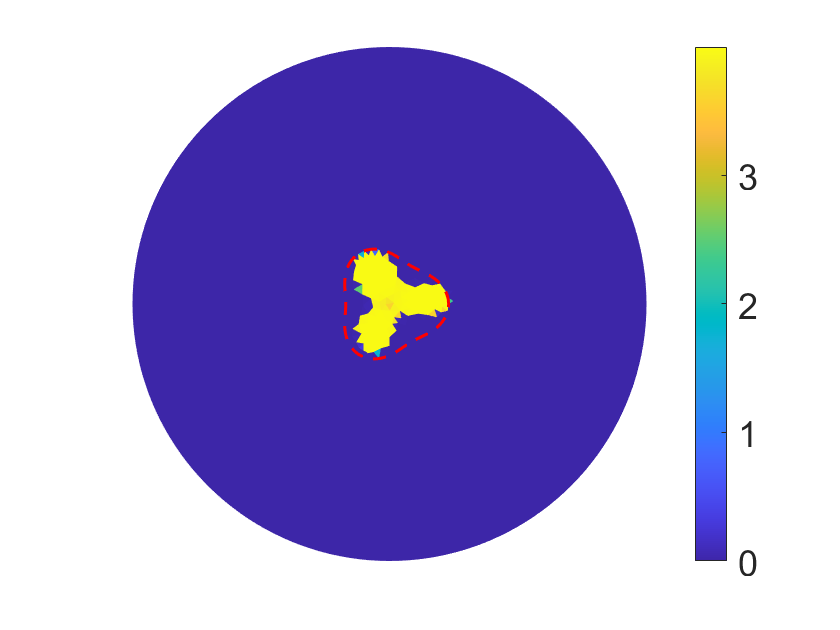}}
\caption{\label{fig:pear} (a) A schematic illustration of the geometric setting;  (b), (c) contour plots of the support of minimizer $\hat h^{\delta}$ under Frobenius norm with different noise levels; (d), (e)  contour plots of the support of minimizer $\hat h^{\delta}$ under the sum of all positive eigenvalues with different noise levels.}
\end{figure}

\begin{example}
In the final example,  we consider a more challenging case.  The scatterer $D$ consists of two small circles with a radius of $0.1$. One is centered at $(-0.35, -0.35)$, and the other at $(0.35, 0.35)$, see Figure \ref{fig:two-scatterers} (a). The refractive index inside $D$ is set to $q=3$. Moreover, the upper bound of the number of negative eigenvalues is chosen as $d(\widetilde{q})=2$.
In this example, we aim to show the necessity of adding the penalty term $\delta \|\bm R^{\delta}(h)\|_F$ to the residual functional. For comparison, we also consider the following minimization problem without the penalty term:
\begin{equation}\label{eq:dis-mini-no-constraint}
 \min_{\substack{h\in \mathcal{A}\\\bm X \geq 0\\ \bm X - \bm R^{\delta}\geq 0}} \text{trace}(\bm X).
 \end{equation}
Figures~\ref{fig:two-scatterers}(b) and \ref{fig:two-scatterers}(c) show the reconstruction results obtained without the penalty term. Although the approach defined in \eqref{eq:dis-mini-no-constraint} performs well when there is almost no noise, it fails to identify the scatterers even under 1\% noise. Moreover,  by comparing Figures~\ref{fig:two-scatterers}(c) and \ref{fig:two-scatterers}(e), one can observe that our method successfully reconstructs multiple scatterers under higher noise levels, which also demonstrates  its robustness and reliability.
\end{example}

\begin{figure}
\centering
\subfigure[ ]{\includegraphics[width=0.45\textwidth]
                   {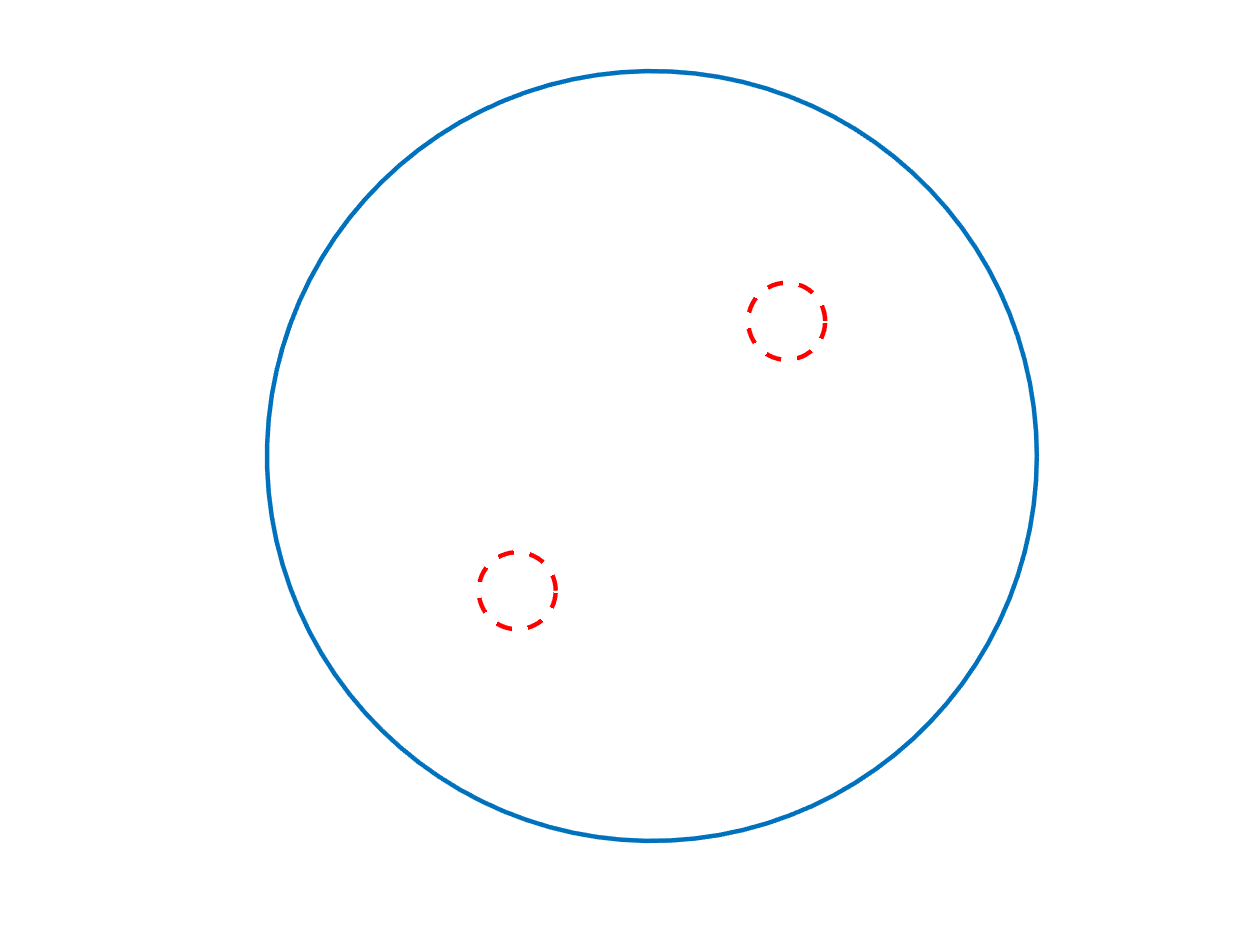}}\\
\subfigure[$\delta=10^{-9}\%$]{\includegraphics[width=0.45\textwidth]
                   {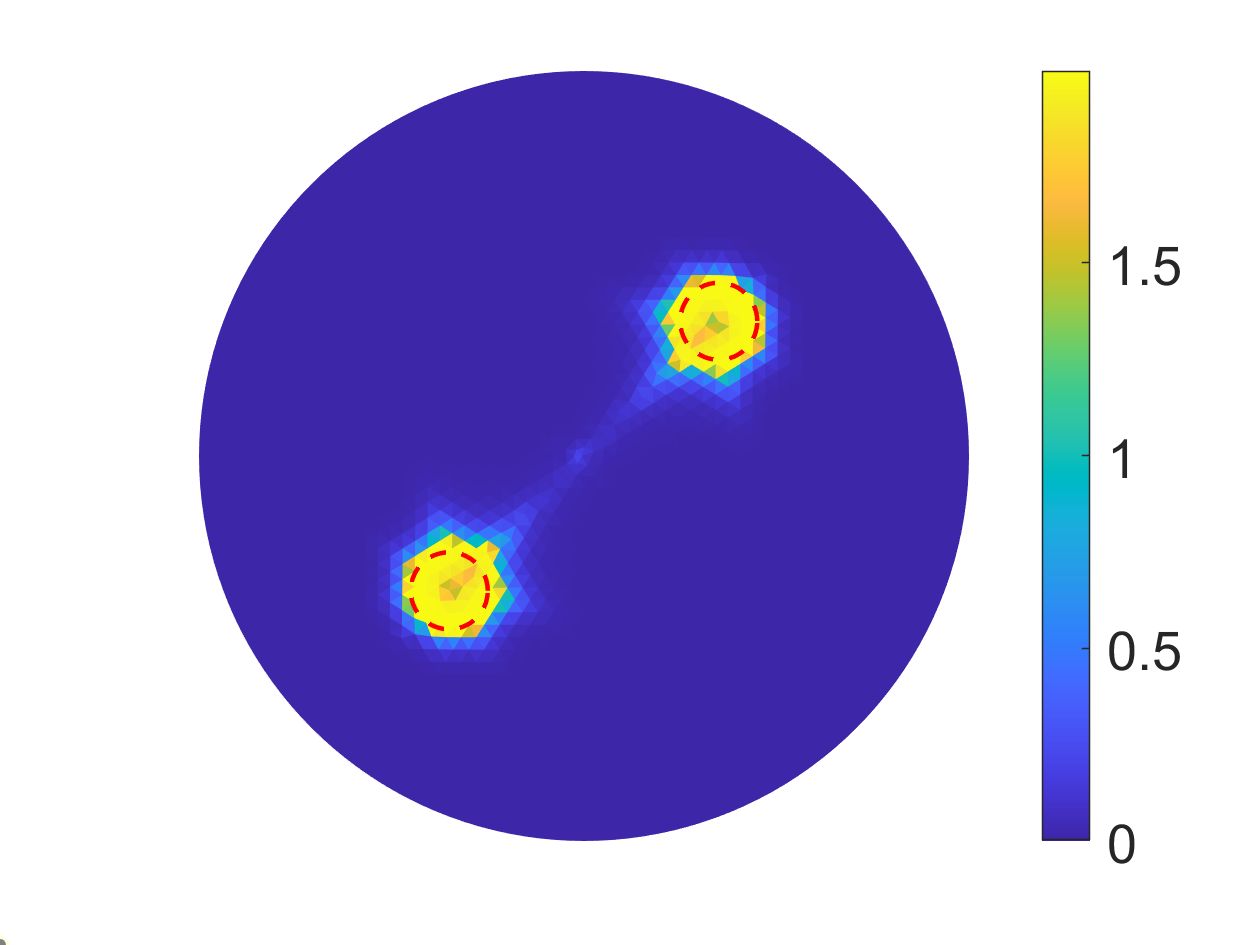}}
\subfigure[$\delta=1\%$]{\includegraphics[width=0.45\textwidth]
                   {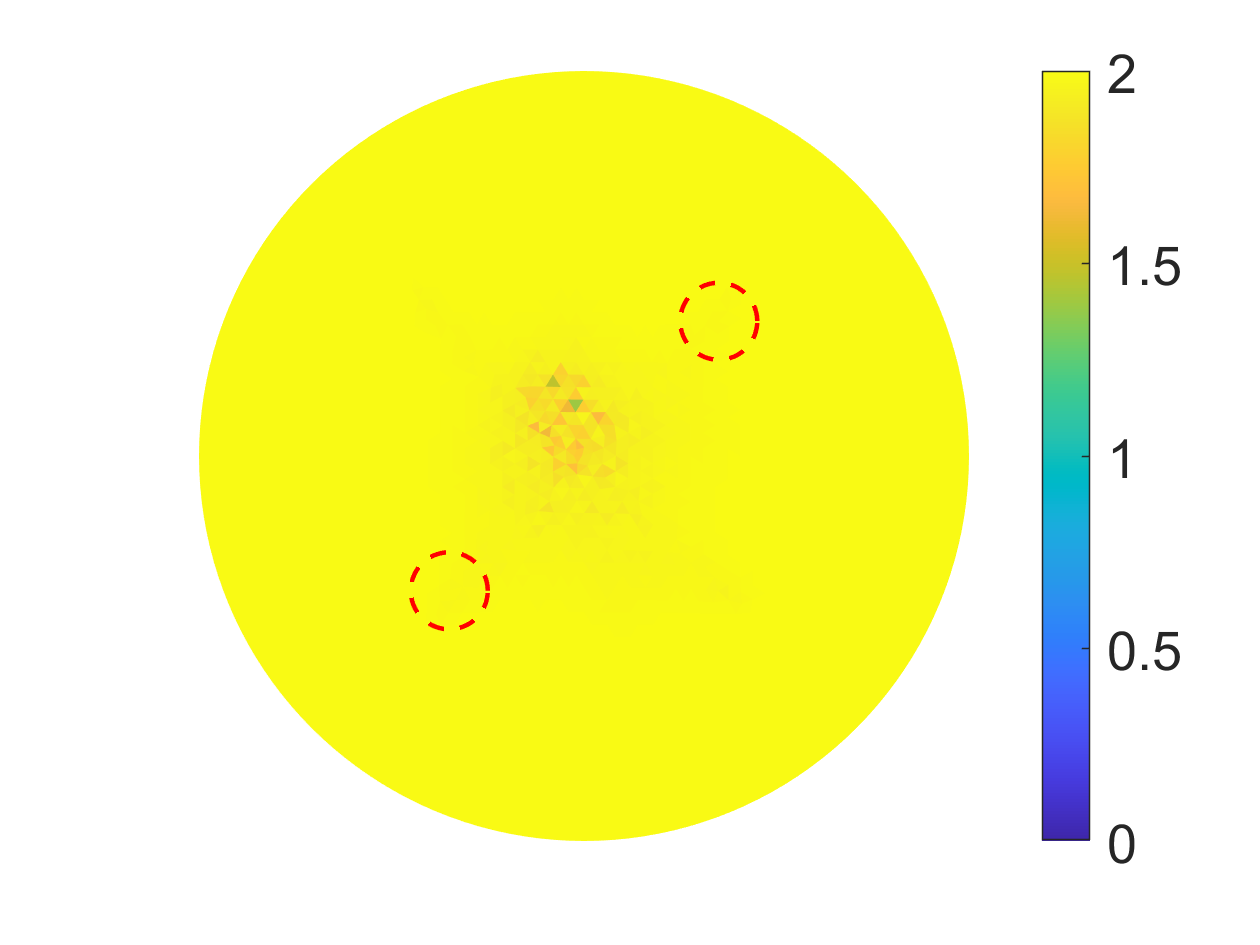}}\\
\subfigure[$\delta=10^{-9}\%$]{\includegraphics[width=0.45\textwidth]
                   {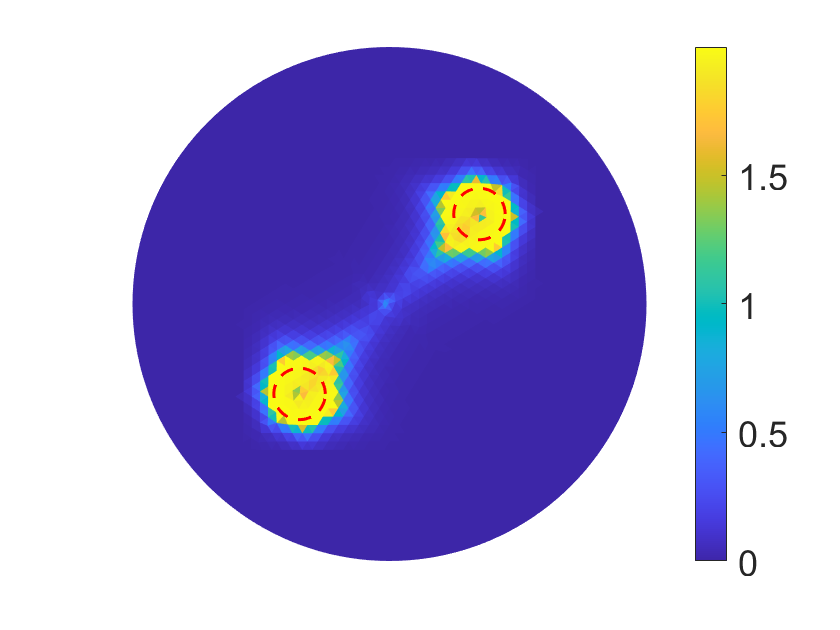}}
\subfigure[$\delta=1\%$]{\includegraphics[width=0.45\textwidth]
                   {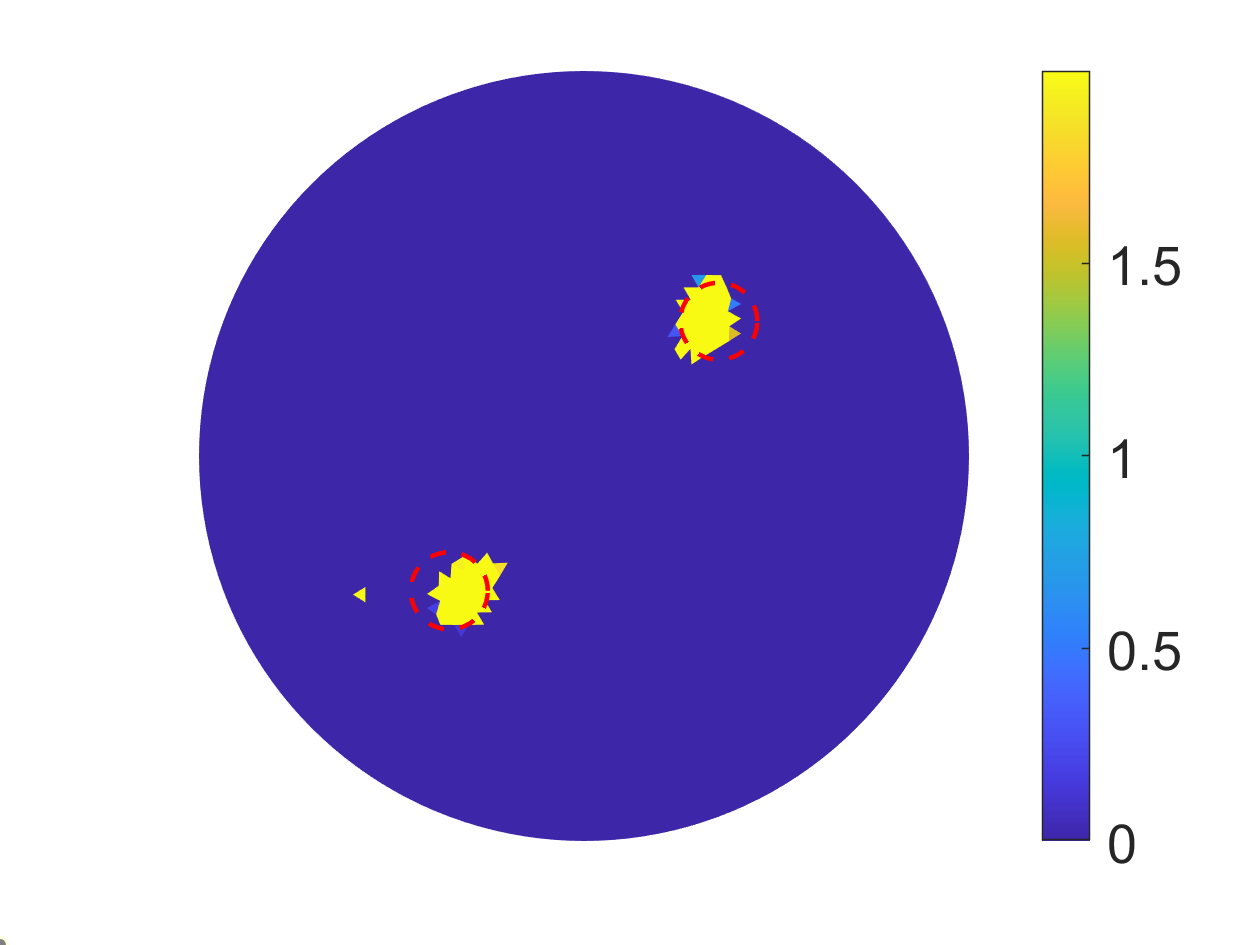}}
\caption{\label{fig:two-scatterers} (a) A schematic illustration of the geometric setting;  (b), (c)  contour plots of the support of minimizer $\hat h^{\delta}$ under the sum of all positive  eigenvalues  without penalty term;
(d), (e)  contour plots of the support of minimizer $\hat h^{\delta}$ under the sum of all positive eigenvalues with  the penalty term $\delta \|\bm R^{\delta}(h)\|_F$. }
\end{figure}

\backmatter

\bmhead{Acknowledgements}

S. Eberle-Blick thanks the German Research Foundation (DFG) for funding  the project
"Inclusion Reconstruction with Monotonicity-based Methods for the  Elasto-oscillatory
 Wave Equation" (reference number 499303971) at the Goethe-University Frankfurt, where
the major part of this article has been conducted during this project.
The work of X. Wang was supported by Alexander von Humboldt Research Fellowship for Postdocs (Ref. 3.5-1234961-CHN-HFST-P), NSFC grant 12471397 and Heilongjiang Provincial Natural Science Foundation grant YQ2024A003.

%%===========================================================================================%%
%% If you are submitting to one of the Nature Portfolio journals, using the eJP submission   %%
%% system, please include the references within the manuscript file itself. You may do this  %%
%% by copying the reference list from your .bbl file, paste it into the main manuscript .tex %%
%% file, and delete the associated \verb+\bibliography+ commands.                            %%
%%===========================================================================================%%

\bibliography{sn-bibliography}
% common bib file
%% if required, the content of .bbl file can be included here once bbl is generated
%%\input sn-article.bbl

\end{document}